\documentclass[11pt,reqno]{amsart}
\usepackage{amssymb,enumitem,setspace,tikz,xcolor,mathrsfs,listings,multicol,mathrsfs,rotating}
\usepackage{rotating}
\usepackage[vcentermath]{youngtab}
\usepackage{hyperref}
\usetikzlibrary{arrows,matrix}
\usepackage[margin=1.125in]{geometry}\onehalfspacing
\tikzset{tab/.style={matrix of math nodes,column sep=-.35, row sep=-.35,text height=7pt,text width=7pt,align=center,inner sep=2,font=\footnotesize}}

\newcommand{\g}{\mathfrak{g}}
\newcommand{\HH}{\mathcal{H}}

\newcommand{\im}{\mathrm{im}}
\newcommand{\inner}[2]{\left\langle #1, #2 \right\rangle}
\newcommand{\iso}{\cong}

\renewcommand{\mid}{:}
\newcommand{\NN}{\mathbf{N}}
\newcommand{\Oint}{\mathcal{O}_{\mathrm{int}}}
\newcommand{\one}{\boldsymbol{1}}

\newcommand{\QQ}{\mathbf{Q}}
\newcommand{\RC}{\operatorname{RC}} 
\newcommand{\re}{\mathrm{re}}

\newcommand{\wt}{\mathrm{wt}}
\newcommand{\zero}{\boldsymbol{0}}
\newcommand{\ZZ}{\mathbf{Z}}
\newcommand{\tep}{\widetilde{\varepsilon}} 

\definecolor{darkred}{rgb}{0.7,0,0} 
\newcommand{\defn}[1]{{\color{darkred}\emph{#1}}} 


\hypersetup{
 colorlinks=True,
 citecolor=green!50!black,
 urlcolor=darkred,
 linkcolor=blue,
}

\usepackage{listings}
\lstset{backgroundcolor=\color{blue!5}}
\lstdefinelanguage{Sage}[]{Python}
{morekeywords={False,True},sensitive=true}
\lstset{
  emph={sage},
  emphstyle=\color{blue},
  emph={[2]self},
  emphstyle={[2]\color{brown}},
  frame=none,
  showtabs=False,
  showspaces=False,
  showstringspaces=False,
  commentstyle={\ttfamily\color{olive}},
  keywordstyle={\ttfamily\color{purple}\bfseries},
  stringstyle={\ttfamily\color{orange}\bfseries},
  language=Sage,
  basicstyle={\footnotesize\ttfamily\singlespacing},
  aboveskip=0.0em,
  belowskip=.1in,
  xleftmargin=.1in,
  xrightmargin=.1in,
}

\usepackage{xparse}

\makeatletter
\protected\def\specialmergetwolists{%
  \begingroup
  \@ifstar{\def\cnta{1}\@specialmergetwolists}
    {\def\cnta{0}\@specialmergetwolists}%
}
\def\@specialmergetwolists#1#2#3#4{%
  \def\tempa##1##2{%
    \edef##2{%
      \ifnum\cnta=\@ne\else\expandafter\@firstoftwo\fi
      \unexpanded\expandafter{##1}%
    }%
  }%
  \tempa{#2}\tempb\tempa{#3}\tempa
  \def\cnta{0}\def#4{}%
  \foreach \x in \tempb{%
    \xdef\cnta{\the\numexpr\cnta+1}%
    \gdef\cntb{0}%
    \foreach \y in \tempa{%
      \xdef\cntb{\the\numexpr\cntb+1}%
      \ifnum\cntb=\cnta\relax
        \xdef#4{#4\ifx#4\empty\else,\fi\x#1\y}%
        \breakforeach
      \fi
    }%
  }%
  \endgroup
}
\makeatother

\usepackage{xparse}
\DeclareDocumentCommand\rpp{ m m g }{
	\foreach \x [count=\s from 1] in {#1}{
	        {\ifnum\s=1
	                \draw (0,-\s)--(\x,-\s);
	                \fi}
	   \draw (0,-\s-1) to (\x,-\s-1);
	   \foreach \y in {0, ..., \x} {\draw (\y,-\s)--(\y,-\s-1);}
	}
	\specialmergetwolists{/}{#1}{#2}\ziplist
	\foreach \x/\y [count=\yi from 1] in \ziplist{
	    \node[anchor=west,font=\scriptsize] at (\x,-\yi - .5) {$\y$};
	}
	\IfValueT {#3}
	{\foreach \z [count=\zi from 1] in {#3} {\node[anchor=east,font=\scriptsize] at (0,-\zi - .5) {$\z$};}}
	{}
}

\theoremstyle{plain}
\newtheorem{thm}{Theorem}[section]
\newtheorem{lemma}[thm]{Lemma}

\newtheorem{prop}[thm]{Proposition}
\newtheorem{cor}[thm]{Corollary}
\theoremstyle{definition}
\newtheorem{dfn}[thm]{Definition}
\newtheorem{ex}[thm]{Example}
\newtheorem{remark}[thm]{Remark}

\numberwithin{equation}{section}
\numberwithin{figure}{section}
\numberwithin{table}{section}
\usepackage[colorinlistoftodos]{todonotes}

\begin{document}
\title[RCs and $\ast$ for generalized Kac--Moody algebras]{Rigged configurations and the $\ast$-involution for generalized Kac--Moody algebras}
\author[B.~Salisbury]{Ben Salisbury}
\address[B.~Salisbury]{Department of Mathematics, Central Michigan University, Mount Pleasant, MI 48859, USA}
\email{ben.salisbury@cmich.edu}
\urladdr{http://people.cst.cmich.edu/salis1bt/}
\thanks{B.S.\ was partially supported by Simons Foundation grant 429950.}

\author[T.~Scrimshaw]{Travis Scrimshaw}
\address[T.~Scrimshaw]{School of Mathematics and Physics, The University of Queensland, St. Lucia, QLD 4072, Australia}
\email{tcscrims@gmail.com}
\urladdr{https://people.smp.uq.edu.au/TravisScrimshaw/}
\thanks{T.S.\ was partially supported by Australian Research Council DP170102648.}
\keywords{crystal, Borcherds algebra, rigged configuration, $\ast$-involution}
\subjclass[2010]{05E10, 17B37}

\begin{abstract}
We construct a uniform model for highest weight crystals and $B(\infty)$ for generalized Kac--Moody algebras using rigged configurations.
We also show an explicit description of the $\ast$-involution on rigged configurations for $B(\infty)$: that the $\ast$-involution interchanges the rigging and the corigging.
We do this by giving a recognition theorem for $B(\infty)$ using the $\ast$-involution.
As a consequence, we also characterize $B(\lambda)$ as a subcrystal of $B(\infty)$ using the $\ast$-involution.
\end{abstract}

\maketitle


\section{Introduction}


Generalized Kac--Moody algebras, also known as Borcherds algebras, are infinite-dimensional Lie algebras introduced by Borcherds~\cite{B88,B92} as a result of his study of the ``Monstrous Moonshine'' conjectures of Conway and Norton~\cite{CN79}.  For more information, see, for example,~\cite{Jurisich96}.

With respect to a symmetrizable Kac--Moody algebra $\g$, crystal bases are combinatorial analogues of representations of the quantized universal enveloping algebra of $\g$.  Defined simultaneously by Kashiwara~\cite{K90,K91} and Lusztig~\cite{Lusztig90} in the early 1990s, crystals have become an integral part of combinatorial representation theory and have seen application to algebraic combinatorics, mathematical physics, the theory of automorphic forms, and more.  In~\cite{JKK05}, Kashiwara's construction of the crystal basis was extended to the symmetrizable generalized Kac--Moody algebra setting.  In particular, the crystal basis for the negative half of the quantized universal enveloping algebra $U_q(\g)$ was introduced, denoted $B(\infty)$, and the crystal basis for the irreducible highest weight module $V(\lambda)$ was also introduced, denoted $B(\lambda)$.  The general combinatorial properties of these crystals were then abstracted in~\cite{JKKS07}, much in the same way that Kashiwara had done in~\cite{K93} for the classical case.  There, theorems characterizing the crystals $B(\infty)$ and $B(\lambda)$ were also proved.  More recently, other combinatorial models for crystals of generalized Kac--Moody algebras are known: Nakajima monomials~\cite{JKKS08}, Littelmann's path model~\cite{JoLa09}, the polyhedral model~\cite{Shin08,Shin08II}, and irreducible components of quiver varieties~\cite{KKS09,KKS12}. Furthermore, there is an extension of Khovanov--Lauda--Rouquier (KLR) algebras for generalized Kac--Moody algebras~\cite{KOP12}.

This paper aims to achieve analogous results to~\cite{SalS15,SalS17,SalS16II} for the case in which $\g$ is a generalized Kac--Moody algebra; that is, to develop a rigged configuration model for the infinity crystal $B(\infty)$, including the $*$-crystal operators, and the irreducible highest weight crystals $B(\lambda)$ when the underlying algebra is a generalized Kac--Moody algebra.  In order to do this, a new recognition theorem (see Theorem~\ref{thm:Binf_recog}) for $B(\infty)$, mimicking the recognition theorem in the classical Kac--Moody cases by Tingley--Webster~\cite[Prop.~1.4]{TW16} (which is a reformulation of~\cite[Prop.~3.2.3]{KS97}), is presented.  The major difference in this new recognition theorem is the existence of imaginary simple roots; the crystal operators associated with imaginary simple roots behave inherently different than that of the case of only real simple roots.  Once the new recognition theorem is established, we state new crystal operators (see Definition~\ref{def:RC_crystal_ops}) and the $*$-crystal operators (see Definition~\ref{def:RC_star_crystal_ops}) on rigged configurations. We then appeal to the fact that $B(\lambda)$ naturally injects into $B(\infty)$ by~\cite[Thm.~5.2]{JKKS07}. We also give a characterization of $B(\lambda)$ inside of $B(\infty)$ using the $\ast$-involution analogous to~\cite[Prop.~8.2]{K95} (see Corollary~\ref{cor:classification_hw_star}).

We note that our results give the first model for crystals of generalized Kac--Moody algebras that has a direct combinatorial description of the $*$-involution on $B(\infty)$; \textit{i.e.}, by not recursively using the crystal and $*$-crystal operators. Moreover, the rigged configuration model for $B(\lambda)$ does not require knowledge other than the combinatorial description of the element, in contrast to the Littelmann path or Nakajima monomial models.

This paper is organized as follows.
In Section~\ref{sec:background}, we give the necessary background on generalized Kac--Moody algebras and their crystals.
In Section~\ref{sec:recognition_thm}, we present the recognition theorem for $B(\infty)$ using the $\ast$-involution.
In Section~\ref{sec:RC}, we construct the rigged configuration model for $B(\infty)$ and the $\ast$-involution.
In Section~\ref{sec:purely_imaginary}, a characterization of rigged configurations belonging to $B(\infty)$ in the purely imaginary case is given.
In Section~\ref{sec:highest_weight}, we show how the rigged configuration model yields highest weight crystals.

\subsection*{Acknowledgements}

TS would like to thank Central Michigan University for its hospitality during his visit in November, 2018, where part of this work was done. TS also would like to thank the Center for Applied Mathematics at Tianjin University for the great working environment during his visit in December, 2018.

\section{Quantum generalized Kac--Moody algebras and crystals}
\label{sec:background}

Let $I$ be a countable set.  A \defn{Borcherds--Cartan matrix} $A = (A_{ab})_{a,b\in I}$ is a real matrix such that
\begin{enumerate}
\item $A_{aa} = 2$ or $A_{aa} \le 0$ for all $a\in I$,
\item $A_{ab} \le 0$ if $i\neq j$, 
\item $A_{ab} \in \ZZ$ if $A_{aa} = 2$, and
\item $A_{ab} = 0$ if and only if $A_{ba} =0$.
\end{enumerate}
An index $a\in I$ is called \defn{real} if $A_{aa} = 2$ and is called \defn{imaginary} if $A_{aa} \le 0$.  The subset of $I$ of all real (resp.\ imaginary) indices is denoted $I^\re$ (resp.\ $I^\im$).  We will always assume that $A_{ab} \in \ZZ$, $A_{aa} \in 2\ZZ_{\le 1}$, and that $A$ is symmetrizable.  Additionally, if $I = I^{\im}$, then the corresponding Borcherds--Cartan matrix will be called \defn{purely imaginary}.


\begin{ex}
Let $I = \{ (i,t) : i \in \ZZ_{\ge -1}, \ 1 \le t \le c(i)\}$, where $c(i)$ is the $i$-th coefficient of the elliptic modular function
\[
j(q) - 744 = q^{-1} + 196884q + 21493760q^2 + \cdots = \sum_{i \ge -1} c(i) q^i.
\]
Define $A = (A_{(i,t),(j,s)})$, where each entry is defined by $A_{(i,t),(j,s)} = -(i+j)$.  This is a Borcherds--Cartan matrix, and it is associated to the Monster Lie algebra used by Borcherds in \cite{B92}.  This matrix is not purely imaginary because $I^\re = \{(-1,1)\}$.
\end{ex}

A \defn{Borcherds--Cartan datum} is a tuple $(A,P^\vee,P,\Pi^\vee,\Pi)$ where
\begin{enumerate}
\item $A$ is a Borcherds--Cartan matrix,
\item $P^\vee = (\bigoplus_{a\in I} \ZZ h_a) \oplus (\bigoplus_{a\in I} \ZZ d_a)$, called the \defn{dual weight lattice},
\item $P = \{ \lambda \in \mathfrak{h}^* : \lambda(P^\vee)\subset \ZZ \}$, where $\mathfrak{h}^* = \QQ\otimes_\ZZ P^\vee$, called the \defn{weight lattice},
\item $\Pi^\vee = \{ h_a : a\in I \}$, called the set of \defn{simple coroots}, and
\item $\Pi = \{ \alpha_a : a\in I \}$, called the set of \defn{simple roots}.
\end{enumerate}
Define the canonical pairing $\langle\ ,\ \rangle\colon P^\vee\times P \longrightarrow \ZZ$ by $\langle h_a, \alpha_b \rangle = A_{ab}$ for all $a,b\in I$.

The set of \defn{dominant integral weights} is $P^+ = \{ \lambda\in P : \lambda(h_a) \ge 0 \text{ for all } a\in I\}$.  The \defn{fundamental weights}, denoted $\Lambda_a \in P^+$ for $a\in I$, are defined by $\langle h_b,\Lambda_a \rangle = \delta_{ab}$ and $\langle d_b, \Lambda_a \rangle = 0$ for all $a,b\in I$.  Finally, set $Q = \bigoplus_{a\in I} \ZZ \alpha_a$ and $Q^- = \sum_{a\in I} \ZZ_{\le0} \alpha_a$. 

Let $U_q(\g)$ be the \defn{quantum generalized Kac--Moody algebra} associated with the Borcherds--Cartan datum $(A,P^\vee,P,\Pi^\vee,\Pi)$.  (For more detailed information on $U_q(\g)$, see, for example, \cite{JKK05}.)

\begin{dfn}[See~\cite{JKKS07}]
An \defn{abstract $U_q(\g)$-crystal} is a set $B$ together with maps
\[
e_a,f_a\colon B \longrightarrow B\sqcup\{\zero\}, \ \ \ \
\varepsilon_a,\varphi_a\colon B \longrightarrow \ZZ\sqcup\{-\infty\}, \ \ \ \ 
\wt\colon B \longrightarrow P,
\]
subject to the following conditions:
\begin{enumerate}
\item $\wt(e_av) = \wt(v) + \alpha_a$ if $e_av \neq \zero$,
\item $\wt(f_av) = \wt(v) - \alpha_a$ if $f_av \neq \zero$,
\item for any $a\in I$ and $v\in B$, $\varphi_a(v) = \varepsilon_a(v) + \langle h_a , \wt(v) \rangle$,
\item for any $a\in I$ and $v,v' \in B$, $f_av = v'$ if and only if $v = e_av'$,
\item for any $a \in I$ and $v\in B$ such that $e_av \neq \zero$, we have
\begin{enumerate}
\item $\varepsilon_a(e_av) = \varepsilon_a(v)-1$ and $\varphi_a(e_av) = \varphi_a(v) + 1$ if $a \in I^\re$,
\item $\varepsilon_a(e_av) = \varepsilon_a(v)$ and $\varphi_a(e_av) = \varphi_a(v) + A_{aa}$ if $a \in I^\im$,
\end{enumerate}
\item for any $a \in I$ and $v\in B$ such that $f_av \neq \zero$, we have
\begin{enumerate}
\item $\varepsilon_a(f_av) = \varepsilon_a(v)+1$ and $\varphi_a(f_av) = \varphi_a(v) - 1$ if $a \in I^\re$,
\item $\varepsilon_a(f_av) = \varepsilon_a(v)$ and $\varphi_a(f_av) = \varphi_a(v) - A_{aa}$ if $a \in I^\im$,
\end{enumerate}
\item for any $a\in I$ and $v\in B$ such that $\varphi_a(v) = -\infty$, we have $e_av = f_av = \zero$.
\end{enumerate}
Here, $\zero$ is considered to be a formal object; i.e., it is not an element of a crystal.
\end{dfn}

\begin{ex}
For each $\lambda \in P^+$, by \cite[\S3]{JKK05}, there exists a unique irreducible highest weight $U_q(\g)$-module $V(\lambda)$ in the category $\Oint$.  (See \cite{JKK05} for the details and explanation of the notation.)  
Associated to each $V(\lambda)$ is a crystal basis $\bigl(L(\lambda),B(\lambda)\bigr)$, in the sense of \cite{JKK05}.  Then $B(\lambda)$ is an abstract $U_q(\g)$-crystal.  In this case, for all $a\in I$ and $v\in B(\lambda)$, we have
\begin{align*}
\varepsilon_a(v) &=
\begin{cases}
 \max\{ k \ge0 : e_a^k v \neq \zero \} & \text{if } a\in I^\re, \\
 0 & \text{if } a \in I^\im,
\end{cases} \\
\varphi_a(v) &=
\begin{cases}
 \max\{ k \ge 0 : f_a^k v \neq \zero \} & \text{if } a \in I^\re, \\
 \langle h_a , \wt(v) \rangle & \text{if } a \in I^\im.
\end{cases}
\end{align*}
Moreover, there exists a unique $u_\lambda \in B(\lambda)$ such that $\wt(u_\lambda) = \lambda$ and 
\[
B(\lambda) = \{ f_{a_1} \cdots f_{a_r} u_\lambda : r \ge 0,\ a_1,\dots,a_r \in I \} \setminus \{\zero\}.
\]
\end{ex}

\begin{ex}
The negative half of the generalized quantum algebra $U_q^-(\g)$ has a crystal basis $\bigl( L(\infty),B(\infty)\bigr)$ in the sense of \cite{JKK05}.   Then $B(\infty)$ is an abstract $U_q(\g)$-crystal.  In this case, there exists a unique element $\one \in B(\infty)$ such that $\wt(\one) = 0$ and 
\[
B(\infty) = \{ f_{a_1} \cdots f_{a_r} \one : r \ge 0,\ a_1,\dots,a_r \in I \}.
\]
Moreover, for all $v\in B(\infty)$ and $a,a_1,\dots,a_r \in I$, we have
\begin{subequations}
\label{eq:B_inf_structure}
\begin{align}
\label{eq:B_inf_ep} \varepsilon_a(v) &= 
\begin{cases}
 \max\{ k\ge 0 : e_a^kv \neq \zero \} & \text{if } a\in I^\re,\\
 0 & \text{if } a \in I^\im,
\end{cases} \\
\label{eq:B_inf_phi} \varphi_a(v) &= \varepsilon_a(v) + \langle h_a, \wt(v)\rangle , \\
\wt(v) &= -\alpha_{a_1} - \cdots - \alpha_{a_r}\ \ \text{ if } v = f_{a_1} \cdots f_{a_r}\one.
\end{align}
\end{subequations}
\end{ex}

\begin{dfn}[See \cite{JKKS07}]
Let $B_1$ and $B_2$ be abstract $U_q(\g)$-crystals.  A \defn{crystal morphism} $\psi\colon B_1 \longrightarrow B_2$ is a map $B_1\sqcup\{\zero\} \longrightarrow B_2\sqcup\{\zero\}$ such that
\begin{enumerate}
\item for $v \in B_1$ and all $a\in I$, we have
\[
\varepsilon_a\bigl( \psi(v) \bigr) = \varepsilon_a(v), \ \ \ \
\varphi_a\bigl( \psi(v) \bigr) = \varphi_a(v),\ \ \ \ 
\wt\bigl( \psi(v) \bigr) = \wt(v) , 
\]
\item if $v \in B_1$ and $f_av \in B_1$, then $\psi(f_av) = f_a\psi(v)$.
\end{enumerate}
\end{dfn}

Let $\psi\colon B_1 \longrightarrow B_2$ be a crystal morphism.  Then $\psi$ is called \defn{strict} if $\psi(e_av) = e_a\psi(v)$ and $\psi(f_av) = f_a\psi(v)$ for all $a \in I$.  The morphism $\psi$ is an \defn{embedding} if the underlying map is injective.  An \defn{isomorphism} of crystals is a bijective, strict crystal morphism.

\begin{dfn}[See \cite{JKKS07}]
Let $B_1$ and $B_2$ be abstract $U_q(\g)$-crystals.  The \defn{tensor product} $B_1\otimes B_2$ is a crystal with operations defined, for $a\in I$, by
\begin{align*}
 e_a(v_1\otimes v_2) &= 
 \begin{cases} 
  e_av_1 \otimes v_2 & \text{if } a\in I^\re \text{ and }\varphi_a(v_1)\ge\varepsilon_a(v_2), \\
  e_av_1 \otimes v_2 & \text{if } a\in I^\im \text{ and } \varphi_a(v_1) > \varepsilon_a(v_2) - A_{aa}, \\
  \zero & \text{if } a\in I^\im \text{ and } \varepsilon_a(v_2) < \varphi_a(v_1) \le \varepsilon_a(v_2)-A_{aa}, \\
  v_1 \otimes e_av_2 & \text{if } a\in I^\re \text{ and }\varphi_a(v_1) < \varepsilon_a(v_2), \\
  v_1 \otimes e_av_2 & \text{if } a\in I^\im \text{ and } \varphi_a(v_1) \le \varepsilon_a(v_2),
 \end{cases} \\
 f_a(v_1\otimes v_2) &= 
 \begin{cases}
  f_av_1 \otimes v_2 & \text{if } \varphi_a(v_1) > \varepsilon_a(v_2) , \\
  v_1 \otimes f_av_2 & \text{if } \varphi_a(v_1) \le \varepsilon_a(v_2), 
 \end{cases}\\
 \varepsilon_a(v_1\otimes v_2) &= \max\bigl\{ \varepsilon_a(v_1), \varepsilon_a(v_2) - \langle h_a, \wt(v_1) \rangle \bigr\} , \\
 \varphi_a(v_1\otimes v_2) &= \max\bigl\{ \varphi_a(v_1) + \langle h_a, \wt(v_2) \rangle, \varphi_a(v_2) \bigr\}, \\
 \wt(v_1\otimes v_2) &= \wt(v_1) + \wt(v_2).
\end{align*}
\end{dfn}

\begin{ex}
\label{ex:T_crystal}
Let $\lambda \in P$ and set $T_\lambda = \{ t_\lambda\}$.  For all $a\in I$, define crystal operations
\[
e_at_\lambda = f_at_\lambda = \zero , \ \ \ \ 
\varepsilon_a(t_\lambda) = \varphi_a(t_\lambda) = -\infty, \ \ \ \ 
\wt(t_\lambda) = \lambda.
\]
Note that $T_\lambda \otimes T_\mu \cong T_{\lambda+\mu}$, for $\lambda,\mu\in P$.  Moreover, by \cite[Prop.~3.9]{JKKS07}, for every $\lambda \in P^+$, there exists a crystal embedding $\iota_\lambda \colon B(\lambda) \lhook\joinrel\longrightarrow B(\infty)\otimes T_\lambda$.
\end{ex}

\begin{ex}
Let $C = \{c\}$.  Then $C$ is a crystal with operations defined, for $a\in I$, by
\[
e_ac = f_ac = \zero, \ \ \ \ 
\varepsilon_a(c) = \varphi_a(c) = 0, \ \ \ \
\wt(c) = 0.
\]
\end{ex}

\begin{thm}[See {\cite[Thm.~5.2]{JKKS07}}]
\label{thm:cutout}
Let $\lambda \in P^+$.  Then $B(\lambda)$ is isomorphic to the connected component of $B(\infty)\otimes T_\lambda \otimes C$ containing $\one \otimes t_\lambda \otimes c$.  
\end{thm}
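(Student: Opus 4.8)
The plan is to realize the isomorphism explicitly via the Kashiwara-type embedding $\iota_\lambda \colon B(\lambda) \hookrightarrow B(\infty) \otimes T_\lambda$ of Example~\ref{ex:T_crystal} (i.e.\ \cite[Prop.~3.9]{JKKS07}), which sends $u_\lambda \mapsto \one \otimes t_\lambda$. Composing with $\,\cdot\, \otimes c$ produces a map $\psi \colon B(\lambda) \to (B(\infty) \otimes T_\lambda) \otimes C$, namely $\psi(v) = \iota_\lambda(v) \otimes c$ and $\psi(\zero) = \zero$, which carries $u_\lambda$ to $\one \otimes t_\lambda \otimes c$. I would prove the theorem by showing that $\psi$ is an injective strict crystal morphism; then $\psi(B(\lambda))$ is a connected subcrystal that is stable under every $e_a$ and $f_a$ (up to $\zero$) and contains $\one \otimes t_\lambda \otimes c$, hence is precisely the connected component of that element, and $\psi$ is a bijective strict morphism onto it and therefore an isomorphism. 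Connectedness of $B(\lambda)$, which is what makes $\psi(B(\lambda))$ a single component, is immediate from $B(\lambda) = \{ f_{a_1} \cdots f_{a_r} u_\lambda : r \ge 0 \} \setminus \{\zero\}$.

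The one estimate that drives everything is the bound $\varphi_a(v) \ge 0$ for all $v \in B(\lambda)$ and all $a \in I$. For $a \in I^\re$ this is immediate since $\varphi_a(v) = \max\{k \ge 0 : f_a^k v \neq \zero\}$. For $a \in I^\im$ we have $\varphi_a(v) = \inner{h_a}{\wt(v)}$, and writing $\wt(v) = \lambda - \sum_{b \in I} c_b \alpha_b$ with $c_b \in \ZZ_{\ge 0}$ gives $\inner{h_a}{\wt(v)} = \inner{h_a}{\lambda} - c_a A_{aa} - \sum_{b \neq a} c_b A_{ab} \ge 0$, using $\lambda \in P^+$, $A_{aa} \le 0$, and $A_{ab} \le 0$ for $b \neq a$. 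Granting this, the morphism axioms for $\psi$ follow from the tensor product rules together with the facts that $\iota_\lambda$ preserves $\wt,\varepsilon_a,\varphi_a$ and intertwines $f_a$ (and, as always for crystal morphisms, $e_a$) wherever the image is nonzero: since $\varepsilon_a(c) = \varphi_a(c) = 0$ and $\varphi_a(v) = \varepsilon_a(v) + \inner{h_a}{\wt(v)} \ge 0$, one gets $\wt(\iota_\lambda(v) \otimes c) = \wt(v)$, $\varepsilon_a(\iota_\lambda(v) \otimes c) = \max\{\varepsilon_a(v), -\inner{h_a}{\wt(v)}\} = \varepsilon_a(v)$, and $\varphi_a(\iota_\lambda(v) \otimes c) = \max\{\varphi_a(v), 0\} = \varphi_a(v)$; and when $f_a v \neq \zero$ the tensor rule acts on the $B(\infty) \otimes T_\lambda$ factor, because $\varphi_a(\iota_\lambda(v)) > 0$ there---for $a$ real since the $f_a$-string through $v$ is nonempty, and for $a$ imaginary because, by the description of the imaginary Kashiwara operators in \cite{JKK05}, $f_a v \neq \zero$ forces $\inner{h_a}{\wt(v)} > 0$. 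Hence $f_a\psi(v) = f_a\iota_\lambda(v) \otimes c = \iota_\lambda(f_a v) \otimes c = \psi(f_a v)$, and $\psi$ is injective since $\iota_\lambda$ and $\,\cdot\, \otimes c$ are.

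It remains to check strictness, i.e.\ the boundary cases $f_a v = \zero \Rightarrow f_a\psi(v) = \zero$ and $e_a v = \zero \Rightarrow e_a\psi(v) = \zero$. For $a \in I^\re$ this is routine: if $f_a v = \zero$ then $\varphi_a(v) = 0$, so the tensor rule acts on the $C$ factor and $f_a\psi(v) = \iota_\lambda(v) \otimes f_a c = \zero$; if $e_a v = \zero$ then $\varepsilon_a(v) = 0$, so writing $\iota_\lambda(v) = b \otimes t_\lambda$ with $b \in B(\infty)$ and using $\varepsilon_a(t_\lambda) = -\infty$ we get $\varepsilon_a(b) = 0$, whence $e_a b = \zero$ because $\varepsilon_a$ is the $e_a$-string length on $B(\infty)$, and since $\varphi_a(\iota_\lambda(v)) \ge 0 = \varepsilon_a(c)$ the tensor rule gives $e_a\psi(v) = e_a\iota_\lambda(v) \otimes c = \zero$. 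The genuinely new---and the main---difficulty is $a \in I^\im$, where the $e_a$-tensor rule carries the extra clause that \emph{kills} $v_1 \otimes v_2$ exactly when $\varepsilon_a(v_2) < \varphi_a(v_1) \le \varepsilon_a(v_2) - A_{aa}$. Here one must feed in the explicit form of the imaginary Kashiwara operators on $B(\lambda)$ and $B(\infty)$ from \cite{JKK05}: that $f_a v = \zero$ in $B(\lambda)$ is equivalent to $\inner{h_a}{\wt(v)} = 0$ (so $\varphi_a(\iota_\lambda(v)) = 0$ and the $C$ factor is selected), and that whenever $e_a v = \zero$ in $B(\lambda)$ one has either $0 < \inner{h_a}{\wt(v)} \le -A_{aa}$---so the killing clause sends $\iota_\lambda(v) \otimes c$ to $\zero$---or else $e_a$ already annihilates $\iota_\lambda(v)$ inside $B(\infty) \otimes T_\lambda$. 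Establishing this dichotomy is precisely the point at which the discrepant behavior of imaginary simple roots must be used carefully; once it is in place, $\psi$ is a bijective strict morphism onto the connected component of $\one \otimes t_\lambda \otimes c$, and the theorem follows.
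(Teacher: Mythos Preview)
The paper does not supply its own proof of this statement; the heading ``See \cite[Thm.~5.2]{JKKS07}'' signals that the result is quoted from the literature, and no argument is given here. So there is nothing in the paper to compare your proposal against directly.

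That said, your outline has a genuine gap. You correctly isolate the hard case as $a \in I^{\im}$ with $e_a v = \zero$, and you state the needed dichotomy---either $\varphi_a(v) \le -A_{aa}$ so the killing clause in the tensor rule fires, or else $e_a \iota_\lambda(v) = \zero$---but you do not prove it; you only say it ``must be used carefully.'' Concretely, when $\langle h_a, \wt(v)\rangle > -A_{aa}$, writing $\iota_\lambda(v) = b \otimes t_\lambda$ and unwinding the tensor rule (using $\varepsilon_a(t_\lambda) = -\infty$) reduces the claim to $e_a b = \zero$ in $B(\infty)$. This is \emph{not} a formal consequence of $\iota_\lambda$ being a (non-strict) crystal embedding: an embedding only guarantees $\iota_\lambda(e_a v) = e_a \iota_\lambda(v)$ when $e_a v \neq \zero$, and it is perfectly possible in general for $e_a v = \zero$ on the source while $e_a$ does not annihilate the image. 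Proving it here requires the explicit description of the imaginary Kashiwara operator $\widetilde{e}_a$ on $U_q^-(\g)$ and on $V(\lambda)$ from \cite{JKK05}, which is exactly the substance of the argument in \cite[Thm.~5.2]{JKKS07}. The companion assertions you invoke---that for imaginary $a$ one has $f_a v = \zero$ in $B(\lambda)$ iff $\langle h_a,\wt(v)\rangle = 0$, and $f_a v \neq \zero$ forces $\langle h_a,\wt(v)\rangle > 0$---are of the same nature: plausible and in fact true, but not stated in the present paper and requiring the same input from \cite{JKK05}. As written, then, your proposal is an accurate road map to the proof in \cite{JKKS07}, but the crucial step is asserted rather than carried out.
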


\begin{ex}
For each $a\in I$, set $\NN_{(a)} = \{ z_a(-n) : n\ge 0 \}$.  Then $\NN_{(a)}$ is a crystal with maps defined, for $b\in I$, by
\begin{gather*}
e_bz_a(-n) = 
\begin{cases}
 z_a(-n+1) & \text{if } b=a,\\
 \zero & \text{otherwise},
\end{cases}
\ \ \ \ \ 
f_bz_a(-n) = 
\begin{cases}
 z_a(-n-1) & \text{if } b=a,\\
 \zero & \text{otherwise},
\end{cases} 
\\ 
\varepsilon_b \bigl( z_a(-n) \bigr) =
\begin{cases}
 n & \text{if } b=a \in I^\re,\\
 0 & \text{if } b=a \in I^\im,\\
 -\infty & \text{otherwise},
\end{cases}
\ \ \ \ \ 
\varphi_b \bigl( z_a(-n) \bigr) =
\begin{cases}
 -n & \text{if } b=a \in I^\re,\\
 -nA_{aa} & \text{if } b=a \in I^\im,\\
 -\infty & \text{otherwise},
\end{cases}\\
\wt\bigl( z_a(-n) \bigr) = -n\alpha_a.
\end{gather*}
By convention, $z_a(-n) = \zero$ for $n < 0$.
\end{ex}

\begin{thm}[See {\cite[Thm.~4.1]{JKKS07}}]
For any $a\in I$, there exists a unique strict crystal embedding $B(\infty) \lhook\joinrel\longrightarrow B(\infty) \otimes \NN_{(a)}$.  
\end{thm}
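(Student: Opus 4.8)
The plan is to reprove this (it is \cite[Thm.~4.1]{JKKS07}) by exhibiting $\Psi_a$ through an explicit formula in terms of the $\ast$-involution on $B(\infty)$ from \cite{JKK05,JKKS07}, treating real and imaginary nodes separately. \emph{Uniqueness} is immediate: any crystal morphism preserves weight, so an embedding must send $\one$ to an element of $B(\infty)\otimes\NN_{(a)}$ of weight $0$; since $\wt(v\otimes z_a(-n)) = \wt(v) - n\alpha_a$ with $\wt(v)\in Q^-$ forces $n=0$ and $v=\one$, we must have $\Psi_a(\one) = \one\otimes z_a(0)$, and strictness then determines $\Psi_a$ on all of $B(\infty) = \{f_{a_1}\cdots f_{a_r}\one\}$.

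For \emph{existence}, write $e_a^\ast = \ast\circ e_a\circ\ast$ and $f_a^\ast = \ast\circ f_a\circ\ast$, and for $v\in B(\infty)$ set $m_a(v) = \max\{n\ge 0 : (e_a^\ast)^n v\ne\zero\}$, which is finite since $\wt\bigl((e_a^\ast)^n v\bigr) = \wt(v) + n\alpha_a$ must lie in $Q^-$. Define $\Psi_a(v) = (e_a^\ast)^{m_a(v)}v \otimes z_a\bigl(-m_a(v)\bigr)$. Then $\Psi_a(\one) = \one\otimes z_a(0)$, and using the elementary identity $e_a^\ast f_a^\ast = \id$ (valid wherever $f_a^\ast$ is defined and nonzero, which in $B(\infty)$ is everywhere) one checks that $\Psi_a$ is a bijection onto $\{v'\otimes z_a(-n) : e_a^\ast v' = \zero,\ n\ge 0\}$ with inverse $v'\otimes z_a(-n)\mapsto (f_a^\ast)^n v'$; in particular $\Psi_a$ is injective. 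It remains to verify that $\Psi_a$ preserves $\wt$, $\varepsilon_b$, $\varphi_b$ and intertwines $e_b, f_b$ for every $b\in I$. The weight identity is clear. For $b\ne a$ this is routine: the operators $e_a^\ast, f_a^\ast$ commute with $e_b, f_b$ on $B(\infty)$, while $z_a(-n)$ is annihilated by $e_b, f_b$ with $\varepsilon_b, \varphi_b = -\infty$ there, so by the tensor product rule all the $b$-data of $\Psi_a(v)$ sit in the first tensor factor and are transported by the map $v\mapsto (e_a^\ast)^{m_a(v)}v$, which is equivariant for $e_b, f_b$ because $m_a(f_bv)=m_a(v)$ and similarly for $e_b$.

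The real content, and what I expect to be the main obstacle, is the case $b = a$: one must check $\Psi_a(e_a v) = e_a\Psi_a(v)$, $\Psi_a(f_a v) = f_a\Psi_a(v)$, $\varepsilon_a(\Psi_a v) = \varepsilon_a(v)$, and $\varphi_a(\Psi_a v) = \varphi_a(v)$. Writing $\Psi_a(v) = v'\otimes z_a(-n)$ with $e_a^\ast v' = \zero$ and applying the tensor product rule, the two branches of that rule (``act in the first factor'' versus ``act in the second factor'') must be matched against how $m_a(\cdot)$ changes under $e_a$ and $f_a$. For $a\in I^\re$ this rests on Kashiwara's relations between the $a$-crystal and $a$-$\ast$-crystal operators on $B(\infty)$ (one shows $m_a(f_av)\in\{m_a(v), m_a(v)+1\}$, the value controlled by whether $\varphi_a(v') > n$), now available over a Borcherds--Cartan datum by \cite{JKK05,JKKS07}. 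For $a\in I^\im$ those relations do not transfer verbatim, since $\varepsilon_a\equiv 0$ on $B(\infty)$ while $e_a$ need not vanish; instead one uses that $\langle h_a,\wt(v')\rangle\ge 0$ for every $v'\in B(\infty)$ (because $\wt(B(\infty))\subset Q^-$ and $A_{ai}\le 0$ for all $i$), carries the $A_{aa}$-shift appearing both in the imaginary clauses of the tensor product rule and in the crystal axioms~(5)--(6), and checks directly that $\Psi_a$ commutes with $f_a$, hence with $e_a$. Alternatively, one may simply invoke \cite[Thm.~4.1]{JKKS07}, where $\Psi_a$ is built from the coproduct on $U_q^-(\g)$.
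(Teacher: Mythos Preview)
The paper does not give its own proof of this statement; it is quoted directly from \cite[Thm.~4.1]{JKKS07}, so there is no argument in the paper to compare against.

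That said, your construction of $\Psi_a$ via $(e_a^\ast)^{m_a(v)}$ is exactly the formula the paper uses later in the proof of Theorem~\ref{thm:Binf_recog} (Equation~\eqref{eq:defining_star}), but with the logical direction reversed. In that proof, the relations you need---$f_a^\star f_b = f_b f_a^\star$, $\tep_a^\star(f_b v) = \tep_a^\star(v)$, and the $\kappa_a$-governed interaction between $f_a$ and $f_a^\star$---are \emph{hypotheses}, and the paper spends several paragraphs verifying from them that $\Psi_a$ intertwines $e_a$ and $f_a$ (separating the cases $\kappa_a(v)=0$, $0<\kappa_a(v)\le -A_{aa}$, and $\kappa_a(v)>-A_{aa}$ for $a\in I^{\im}$). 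To run your argument, those same relations must be \emph{established} for $B(\infty)$ from the algebraically defined $\ast$-involution, and this is precisely where you hand-wave: ``the operators $e_a^\ast, f_a^\ast$ commute with $e_b,f_b$ on $B(\infty)$'' and ``rests on Kashiwara's relations\ldots\ now available over a Borcherds--Cartan datum'' are assertions, not proofs. There is also a mild circularity risk: the paper's combinatorial description of $e_a^\ast, f_a^\ast$ immediately after Theorem~\ref{thm:reg_iso_star} is \emph{in terms of} $\Psi_a$, so you must be sure any relation you invoke is proved from the antiautomorphism side (as in \cite{L12}) rather than via the embedding you are trying to construct. Your closing sentence, offering to simply invoke \cite[Thm.~4.1]{JKKS07}, is in the end what the paper itself does.
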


\section{Recognition theorem for $B(\infty)$}
\label{sec:recognition_thm}

\begin{thm}[See {\cite[Thm.~5.1]{JKKS07}}]
\label{thm:JKKS07infrec}
Let $B$ be an abstract $U_q(\g)$-crystal such that
\begin{enumerate}
\item\label{item:rec_1} $\wt(B) \subseteq Q^-$,
\item\label{item:rec_2} there exists an element $v_0 \in B$ such that $\wt(v_0) = 0$,
\item\label{item:rec_3} for any $v \in B$ such that $v \neq v_0$, there exists some $a\in I$ such that $e_av \neq \zero$, and
\item\label{item:rec_4} for all $a\in I$, there exists a strict embedding $\Psi_a\colon B \lhook\joinrel\longrightarrow B\otimes \NN_{(a)}$.
\end{enumerate}
Then there exists a crystal isomorphism $B \cong B(\infty)$ such that $v_0 \mapsto \one$.
\end{thm}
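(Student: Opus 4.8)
The plan is to build a crystal isomorphism $\Theta\colon B \to B(\infty)$ with $\Theta(v_0) = \one$, in three stages: extract the structural content of \ref{item:rec_1}--\ref{item:rec_3}; use the embeddings of \ref{item:rec_4} to attach to each $v\in B$ the same one-step ``peeling'' data that the unique embeddings $B(\infty)\hookrightarrow B(\infty)\otimes\NN_{(a)}$ attach to $B(\infty)$; and then match the two crystals by induction on depth.

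First I would record what \ref{item:rec_1}--\ref{item:rec_3} give. If $\wt(v) = 0$ and $v\ne v_0$, then by \ref{item:rec_3} some $e_a v\ne\zero$, so $\wt(e_a v) = \alpha_a\notin Q^-$, contradicting \ref{item:rec_1}; hence $v_0$ is the unique weight-zero element. Writing $\wt(v) = -\sum_a m_a(v)\alpha_a$ with $m_a(v)\in\ZZ_{\ge0}$, put $\operatorname{dep}(v) = \sum_a m_a(v)\in\ZZ_{\ge0}$; then $\operatorname{dep}(v) = 0$ iff $v = v_0$, and $\operatorname{dep}(e_a v) = \operatorname{dep}(v) - 1$ whenever $e_a v\ne\zero$. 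Since $\operatorname{dep}(v)>0$ forces $v\ne v_0$, \ref{item:rec_3} and the crystal axiom giving $f_a e_a v = v$ (valid when $e_a v\ne\zero$, for all $a\in I$) yield by induction on depth that $B = \{\, f_{a_1}\cdots f_{a_r}v_0 : r\ge0,\ a_i\in I \,\}\setminus\{\zero\}$; in particular $B$ is connected. The same statements hold for $B(\infty)$ with $v_0$ replaced by $\one$ (which also satisfies \ref{item:rec_1}--\ref{item:rec_4}, the last by the preceding theorem).

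Next, the engine of the argument. Fix $a\in I$. As every element of $B\otimes\NN_{(a)}$ is a pure tensor and $\NN_{(a)} = \{z_a(-n)\}_{n\ge0}$, we may write $\Psi_a(v) = \sigma_a(v)\otimes z_a(-n_a(v))$ with $\sigma_a(v)\in B$ and $n_a(v)\in\ZZ_{\ge0}$ uniquely determined; since $\Psi_a$ preserves weights and $v_0$ is the unique weight-zero element, $\Psi_a(v_0) = v_0\otimes z_a(0)$. Strictness of $\Psi_a$, together with the tensor-product formulas and the fact that $\varphi_b(z_a(-n)) = -\infty$ for $b\ne a$, forces $\sigma_a(f_b v) = f_b\sigma_a(v)$ and $n_a(f_b v) = n_a(v)$ for $b\ne a$, while $f_a$ acts either inside the $\sigma_a$-factor or by incrementing $n_a$. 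For $a\in I^\re$, $\sigma_a(v)$ and $n_a(v)$ encode the maximal string through $v$ for the operator $\Psi_a$ singles out (the ``$\ast$-string'' at $a$) and its length; for $a\in I^\im$ the same decomposition holds but $n_a$ is now governed by the imaginary crystal axiom $\varphi_a(e_a v) = \varphi_a(v) + A_{aa}$ with $\varepsilon_a$ constant along the string, rather than by an $\varepsilon_a$-statistic. Running the identical analysis on the \emph{unique} strict embedding $B(\infty)\hookrightarrow B(\infty)\otimes\NN_{(a)}$ of the preceding theorem yields matching data $\overline{\sigma}_a, \overline{n}_a$ on $B(\infty)$, obeying the same commutation rules, as those are dictated purely by strictness and the tensor formulas.

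Finally I would build $\Theta$ by induction on depth: set $\Theta(v_0) = \one$, and assume $\Theta$ is defined on $\{\operatorname{dep}\le d\}$ as a depth-preserving bijection intertwining $e_b,f_b,\varepsilon_b,\varphi_b,\wt$ and matching $(\sigma_b,n_b)$ with $(\overline{\sigma}_b,\overline{n}_b)$. For $v$ with $\operatorname{dep}(v) = d+1$ one first needs the ``starred'' analogue of \ref{item:rec_3}, namely that $v\ne v_0$ forces $n_a(v)>0$ for some $a$ (I expect this to follow from injectivity of the $\Psi_a$ and the weight bound \ref{item:rec_1}); since then $\operatorname{dep}(\sigma_a(v)) < \operatorname{dep}(v)$, one is forced to set $\Theta(v) := \overline\Psi_a^{-1}\bigl(\Theta(\sigma_a(v))\otimes z_a(-n_a(v))\bigr)$, after checking the argument lies in the image of $\overline\Psi_a$. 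What remains is to verify independence of the choice of $a$, that the extended $\Theta$ is bijective, and that it intertwines every $e_b,f_b,\varepsilon_b,\varphi_b$; I expect \emph{this verification to be the main obstacle}, and within it the imaginary indices, where the tensor-product rules for $e_a,f_a$, the identity $\varphi_a(f_a v) = \varphi_a(v) - A_{aa}$, and the behaviour of $\varepsilon_a,\varphi_a$ all differ from the real case and must be carried through $\Psi_a$ by a separate, more delicate bookkeeping. Granting this, $\Theta$ is a bijective strict crystal morphism, hence an isomorphism $B\iso B(\infty)$ with $v_0\mapsto\one$.
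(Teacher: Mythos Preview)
The paper does not give its own proof of this statement: Theorem~\ref{thm:JKKS07infrec} is quoted from \cite[Thm.~5.1]{JKKS07} and used as a black box (it is the target of the reduction in the proof of Theorem~\ref{thm:Binf_recog}). So there is nothing in the paper to compare your attempt against directly.

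That said, a comment on your outline versus the argument in \cite{JKKS07} (which follows Kashiwara's original scheme). Your first stage is standard and correct. Your third stage, however, hinges on the ``starred analogue of~\ref{item:rec_3}'' --- that $v\ne v_0$ forces $n_a(v)>0$ for some $a$ --- and you correctly flag this as unproved. In the abstract setting of Theorem~\ref{thm:JKKS07infrec} there is no $\ast$-involution available, so this is a genuine gap, and establishing it (together with the independence-of-$a$ check you anticipate) would be the real work. The standard proof sidesteps both issues entirely: having already written $v=f_{a_1}\cdots f_{a_r}v_0$ in your first stage, one applies the composite embedding
\[
\Psi_{a_1}\circ\cdots\circ\Psi_{a_r}\colon B\longhookarrow B\otimes\NN_{(a_r)}\otimes\cdots\otimes\NN_{(a_1)},
\]
each $\Psi_{a_i}$ extended by tensoring with the identity on the trailing $\NN$-factors. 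Strictness gives
\[
v\longmapsto f_{a_1}\cdots f_{a_r}\bigl(v_0\otimes z_{a_r}(0)\otimes\cdots\otimes z_{a_1}(0)\bigr)\in\{v_0\}\otimes\NN_{(a_r)}\otimes\cdots\otimes\NN_{(a_1)}.
\]
Running the identical iteration for $B(\infty)$ (which satisfies the same hypotheses) and identifying $v_0\leftrightarrow\one$ matches the two images and yields $\Theta$ directly on each element; strictness then forces $\Theta$ to intertwine all crystal data, and injectivity of the embeddings gives bijectivity. No peeling-off-one-layer-at-a-time, no choice-independence check, and the imaginary case requires no separate bookkeeping beyond the tensor product rule already built into ``strict''.
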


There is a $\QQ(q)$-antiautomorphism $*\colon U_q(\g) \longrightarrow U_q(\g)$ defined by
\[
E_a \mapsto E_a, \qquad
F_a \mapsto F_a, \qquad
q \mapsto q, \qquad
q^h \mapsto q^{-h},
\]
where $E_a$, $F_a$, and $q^h$ ($a\in I$, $h \in P^\vee$) are the generators for $U_q(\g)$ (see \cite[\S6]{JKK05}).  This is an involution which leaves $U_q^-(\g)$ stable.  Thus, the map $\ast$ induces a map on $B(\infty)$, which we also denote by $*$, and is called the \defn{$\ast$-involution} or \defn{star involution} (and is sometimes known as Kashiwara's involution \cite{CT15,JL09,Kamnitzer07,L12,Sav09,TW16}).  Denote by $B(\infty)^*$ the image of $B(\infty)$ under $*$.

\begin{thm}[See {\cite[Thm.~4.7]{L12}}]
\label{thm:reg_iso_star}
We have
$
B(\infty)^* = B(\infty).
$
\end{thm}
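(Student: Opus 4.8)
The plan is to establish $B(\infty)^* = B(\infty)$ by verifying that the set $B(\infty)^*$, equipped with suitably $*$-transported crystal structure maps, satisfies all four hypotheses of the recognition theorem (Theorem~\ref{thm:JKKS07infrec}), and then invoking that theorem to conclude $B(\infty)^* \cong B(\infty)$ as crystals sending the distinguished element to $\one$. Since both $B(\infty)$ and $B(\infty)^*$ sit inside the same ambient object coming from $U_q^-(\g)$ and share the element $\one$ (note $\one^* = \one$ since $*$ fixes the highest weight vector of $U_q^-(\g)$), the isomorphism will in fact be the identity, giving equality of sets.

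First I would set up the $*$-crystal structure: define $e_a^* = {*} \circ e_a \circ {*}$, $f_a^* = {*} \circ f_a \circ {*}$, $\varepsilon_a^* = \varepsilon_a \circ {*}$, $\varphi_a^* = \varphi_a \circ {*}$, and observe that $\wt({*}v) = \wt(v)$ for all $v \in B(\infty)$ (the $*$-involution preserves weights because it sends $q^h \mapsto q^{-h}$ but fixes the $E_a, F_a$, so the weight grading on $U_q^-(\g)$ is untouched). Then $(B(\infty), e_a^*, f_a^*, \varepsilon_a^*, \varphi_a^*, \wt)$ is again an abstract $U_q(\g)$-crystal; equivalently, $B(\infty)^*$ with the ordinary crystal operations is a crystal. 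I would next check the four conditions. Conditions \eqref{item:rec_1} and \eqref{item:rec_2} are immediate: $\wt(B(\infty)^*) = \wt(B(\infty)) \subseteq Q^-$, and $\one \in B(\infty)^*$ has weight $0$. Condition \eqref{item:rec_3} follows because for $v \neq \one$ we have $v = f_{a_1} \cdots f_{a_r} \one$ with $r \ge 1$, hence $v^* = f_{a_1}^* \cdots f_{a_r}^* \one \neq \one$, and applying $e_{a_r}^*$ is nonzero; more intrinsically, any element of $B(\infty)^*$ of nonzero weight must have some $e_a^*$ acting nontrivially since it is reached from $\one$ by lowering operators in the $*$-structure.

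The substantive point is condition \eqref{item:rec_4}: producing, for each $a \in I$, a strict embedding $\Psi_a^* \colon B(\infty)^* \hookrightarrow B(\infty)^* \otimes \NN_{(a)}$. The strategy is to transport the known strict embedding $\Psi_a \colon B(\infty) \hookrightarrow B(\infty) \otimes \NN_{(a)}$ from Theorem~4.1 of~\cite{JKKS07} through $*$. Concretely, one uses the standard fact (for symmetrizable Kac--Moody algebras, due to Kashiwara, and needing to be checked to persist in the Borcherds setting) that the two families of operators $\{e_a, f_a\}$ and $\{e_a^*, f_a^*\}$ commute in the appropriate sense on $B(\infty)$, which lets one define $\Psi_a^*$ by extracting the ``$a$-string in the $*$-direction'': send $v \in B(\infty)$ to $(e_a^*)^{\varepsilon_a^*(v)} v \otimes z_a(-\varepsilon_a^*(v))$, and verify this intertwines all crystal operators. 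I expect this to be the main obstacle, precisely because of the caveat flagged in the introduction: imaginary simple roots $a \in I^\im$ behave differently — the operators $e_a$ need not be injective on their image and $\varepsilon_a \equiv 0$ on $B(\infty)$ — so the interaction between the $a$-string and the $*$-$a$-string requires a separate argument in the imaginary case, likely using the tensor product rule for $a \in I^\im$ and the structure equations \eqref{eq:B_inf_structure}. Once \eqref{item:rec_4} is secured, Theorem~\ref{thm:JKKS07infrec} yields $B(\infty)^* \cong B(\infty)$ with $\one \mapsto \one$, and since $*$ is an involution on $U_q^-(\g)$ this isomorphism composed with $*$ is a crystal automorphism of $B(\infty)$ fixing $\one$, hence the identity by uniqueness in the recognition theorem, so $B(\infty)^* = B(\infty)$.
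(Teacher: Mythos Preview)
The paper does not supply a proof of this statement; it is quoted verbatim as \cite[Thm.~4.7]{L12} and then used as input. So there is no argument in the paper to compare your proposal against.

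That said, your approach has a genuine circularity. The content of the theorem is that the algebra anti\-involution $*$ on $U_q^-(\g)$ preserves the crystal lattice $L(\infty)$ and the crystal basis $B(\infty) \subset L(\infty)/qL(\infty)$. Your plan begins by writing $e_a^* = {*}\circ e_a\circ{*}$ as a map $B(\infty)\to B(\infty)\sqcup\{\zero\}$, which already presupposes that $*$ carries $B(\infty)$ into itself --- exactly the assertion to be established. The recognition theorem (Theorem~\ref{thm:JKKS07infrec}) classifies abstract crystals; it cannot on its own tell you that a specific algebra involution of $U_q^-(\g)$ stabilizes a specific subset of $L(\infty)/qL(\infty)$. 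The argument in \cite{L12} (following Kashiwara in the Kac--Moody case) instead works at the algebra level: one checks that $*(L(\infty))$ is again a crystal lattice with basis $*(B(\infty))$, and then invokes the uniqueness of the crystal basis of $U_q^-(\g)$ established in \cite{JKK05}. Your closing step is also problematic: composing the putative isomorphism $B(\infty)^*\to B(\infty)$ with $*$ does not yield a crystal automorphism of $(B(\infty),e_a,f_a)$, because $*$ is not a morphism for the standard crystal structure, so the uniqueness clause of Theorem~\ref{thm:JKKS07infrec} does not apply.
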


This induces a new crystal structure on $B(\infty)$ with Kashiwara operators
\[
 e_a^* = * \circ  e_a \circ *,
 \qquad\qquad
 f_a^* = * \circ  f_a \circ *,
\]
and the remaining crystal structure is given by
\[
\varepsilon_a^* = \varepsilon_a \circ * , \qquad \qquad
\varphi_a^* = \varphi_a \circ *,
\]
and weight function $\wt$, the usual weight function on $B(\infty)$. From~\cite{L12}, we can combinatorially define $e_a^*$ and $f_a^*$ by
\[
e_a^*v = \Psi_a^{-1}\bigl( v' \otimes z_a(-k+1) \bigr),
\qquad\qquad
f_a^*v = \Psi_a^{-1}\bigl( v' \otimes z_a(-k-1) \bigr),
\]
where $\Psi_a(v) = v' \otimes z_a(-k)$.

We will also need the modified statistics:
\begin{align*}
\tep_a(v) & := \max\{ k' \ge 0 \mid e_a^{k'} v \neq \zero \},
\\ \widetilde{\varphi}_a(v) & := \max\{ k' \ge 0 \mid f_a^{k'} v \neq \zero \},
\end{align*}
and similarly for $\tep^{*}_a$ and $\widetilde{\varphi}^{*}_a$ using $e_a^{*}$ and $f_a^{*}$ respectively.
Note that $\tep_a(v) = \varepsilon_a(v)$ and $\widetilde{\varphi}_a(v) = \varphi_a(v)$, as well as for the $*$-versions, when $a \in I^{\re}$.
Additionally, for $v\in B(\infty)$ and $a\in I$, define
\begin{equation}
\label{eq:jump}
\kappa_a(v) := \begin{cases}
\varepsilon_a(v) + \varepsilon_a^*(v) + \bigl\langle h_a, \wt(v) \bigr\rangle & \text{if } a \in I^{\re}, \\
\varepsilon_a(v) + \tep^{*}_a(v) A_{aa} + \bigl\langle h_a, \wt(v) \bigr\rangle & \text{if } a \in I^{\im}.
\end{cases}
\end{equation}

We will appeal to the following statement, which is a generalized Kac--Moody analogue of the result used in~\cite{SalS16II} coming from~\cite{TW16} (but based on Kashiwara and Saito's classification theorem for $B(\infty)$ in the Kac--Moody setting from~\cite{KS97}).  First, a \defn{bicrystal} is a set $B$ with two abstract $U_q(\g)$-crystal structures $(B,e_a,f_a,\varepsilon_a,\varphi_a,\wt)$ and $(B,e_a^\star,f_a^\star,\varepsilon_a^\star,\varphi_a^\star,\wt)$ with the same weight function.  In such a bicrystal $B$, we say $v\in B$ is a \defn{highest weight element} if $e_av = e_a^\star v = \zero$ for all $a \in I$.

\begin{thm}
\label{thm:Binf_recog}
Let $(B,e_a,f_a,\varepsilon_a,\varphi_a,\wt)$ and $(B^\star,e_a^\star,f_a^\star,\varepsilon_a^\star,\varphi_a^\star,\wt)$ be connected abstract $U_q(\g)$-crystals with the same highest weight element $v_0 \in B \cap B^\star$ that is the unique element of weight $0$, where the remaining crystal data is determined by setting $\wt(v_0) = 0$ and $\varepsilon_a(v)$ by Equation~\eqref{eq:B_inf_ep}. Assume further that, for all $a \neq b$ in $I$ and all $v\in B$, 
\begin{enumerate}
\item\label{item:star1} $f_av$, $f_a^\star v \neq \zero$;
\item\label{item:star2} $f_a^\star f_bv = f_bf_a^\star v$ and $\tep_a^{\star}(f_b v) = \tep_a^{\star}(v)$ and $\tep_b(f_a^{\star} v) = \tep_b(v)$; 
\item\label{item:star3} $\kappa_a(v) = 0$ implies $f_av = f_a^\star v$;
\item\label{item:star4} for $a \in I^{\re}$:
  \begin{enumerate}
  \item\label{item:star4a} $\kappa_a(v) \ge 0$;
  \item\label{item:star4b} $\kappa_a(v) \ge 1$ implies $\varepsilon_a^\star(f_av) = \varepsilon_a^\star(v)$ and $\varepsilon_a(f_a^\star v) = \varepsilon_a(v)$;
  \item\label{item:star4c} $\kappa_a(v) \ge 2$ implies $f_af_a^\star v = f_a^\star f_av$;
  \end{enumerate}
\item\label{item:star5} for $a \in I^{\im}$: $\kappa_a(v) > 0$ implies $\tep_a^{\star}(f_a v) = \tep_a^{\star}(v)$ and $f_a f_a^{\star} v = f_a^{\star} f_a v$.
\end{enumerate}
Then
$
(B,e_a,f_a,\varepsilon_a,\varphi_a,\wt) \iso B(\infty).
$
Moreover, suppose $\kappa_a(v) = 0$ if and only if
\[
\kappa_a^{\star}(v) := \varepsilon_a^*(v) + \tep_a(v) A_{aa} + \bigl\langle h_a, \wt(v) \bigr\rangle = 0
\]
for all $a \in I^{\im}$ and $v \in B$.
Then
\[
(B^\star,e_a^\star,f_a^\star,\varepsilon_a^\star,\varphi_a^\star,\wt) \iso B(\infty)
\]
with $e_a^\star = e_a^*$ and $f_a^\star = f_a^*$.
\end{thm}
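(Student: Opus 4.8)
The plan is to deduce the first assertion from the recognition theorem of~\cite{JKKS07} (Theorem~\ref{thm:JKKS07infrec}) by verifying its hypotheses~\eqref{item:rec_1}--\eqref{item:rec_4}, and then to obtain the ``moreover'' statement from the invariance of the hypotheses~\eqref{item:star1}--\eqref{item:star5} under interchanging the two crystal structures, together with the uniqueness of the strict embedding $B(\infty)\hookrightarrow B(\infty)\otimes\NN_{(a)}$ provided by~\cite[Thm.~4.1]{JKKS07}.

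Hypothesis~\eqref{item:rec_2} holds by assumption. Using connectedness of $B$, condition~\eqref{item:star1}, and the finiteness of the $e_a$- and $e_a^\star$-strings through each element, one argues as in the symmetrizable Kac--Moody setting~\cite{KS97,TW16} that iterating the $e_a$ brings every element of $B$ to the unique element of weight $0$, so that $\wt(B)\subseteq Q^-$ (hypothesis~\eqref{item:rec_1}) and $v_0$ is the unique highest weight element for the $(e_a,f_a)$-structure (hypothesis~\eqref{item:rec_3}). Since then $\langle h_a,\wt(v)\rangle\ge 0$ for all $a$ and $v$, and since $\kappa_a(v)$ equals $\langle h_a,\wt(x)\rangle$ for $x$ the $\star$-highest element of the $a$-$\star$-string through $v$ when $a\in I^\im$, we obtain $\kappa_a(v)\ge 0$ for every $a\in I$ (for $a\in I^\re$ this is~\eqref{item:star4a}); the same argument applied to the other structure gives $\kappa_a^\star(v)\ge 0$ for $a\in I^\im$.

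The main work is hypothesis~\eqref{item:rec_4}: for each $a\in I$ we construct a strict crystal embedding $\Psi_a\colon B\hookrightarrow B\otimes\NN_{(a)}$. Following~\cite{KS97,TW16}, I would set
\[
\Psi_a(v) := (e_a^\star)^{\tep_a^\star(v)}v \otimes z_a\bigl(-\tep_a^\star(v)\bigr),
\]
so that the first tensor factor is the $\star$-highest element of the $a$-$\star$-string through $v$ and the second records its length; $\Psi_a$ is injective because $v = (f_a^\star)^{\tep_a^\star(v)}\bigl((e_a^\star)^{\tep_a^\star(v)}v\bigr)$ by~\eqref{item:star1}, and $\Psi_a(v_0) = v_0\otimes z_a(0)$. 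It remains to verify that $\Psi_a$ commutes with $e_b$ and $f_b$ for every $b\in I$. For $b\neq a$ this is immediate from~\eqref{item:star2}: the commutation $f_a^\star f_b = f_bf_a^\star$ and the invariances $\tep_a^\star(f_bv) = \tep_a^\star(v)$, $\tep_b(f_a^\star v) = \tep_b(v)$ say exactly that the left factor transforms correctly while the right factor is unchanged, which matches the tensor product rule since $e_b,f_b$ annihilate $\NN_{(a)}$ and $\varepsilon_b(z_a(-k)) = -\infty$. For $b = a$ one argues by cases on $\kappa_a(v)$: a direct computation shows that $\varphi_a$ of the $\star$-highest element of the string equals $\kappa_a(v)$ for $a\in I^\im$ and $\kappa_a(v) + \varepsilon_a^\star(v)$ for $a\in I^\re$, so the relevant branch of the tensor product rule is determined by whether $\kappa_a(v)>0$; the boundary case $\kappa_a(v) = 0$ is handled by~\eqref{item:star3} (where $f_av = f_a^\star v$, forcing the ``right-factor'' branch), and the commutations $f_af_a^\star v = f_a^\star f_av$ and invariances $\tep_a^\star(f_av) = \tep_a^\star(v)$, $\varepsilon_a^\star(f_av) = \varepsilon_a^\star(v)$ needed to push $\Psi_a$ past $f_a$ and $e_a$ are supplied by~\eqref{item:star4b}--\eqref{item:star4c} for $a\in I^\re$ and by~\eqref{item:star5} for $a\in I^\im$. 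I expect this case analysis to be the main obstacle, in particular the imaginary-node case, where under the convention $\varepsilon_a\equiv 0$ the operators $e_a,f_a$ do not organize into ordinary strings and one must argue throughout with the modified statistics $\tep_a$, $\tep_a^\star$ and the quantity $\kappa_a$. Once all the $\Psi_a$ are built, Theorem~\ref{thm:JKKS07infrec} yields $(B,e_a,f_a,\varepsilon_a,\varphi_a,\wt)\iso B(\infty)$ with $v_0\mapsto\one$.

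For the ``moreover'' part, observe that hypotheses~\eqref{item:star1} and~\eqref{item:star2} are symmetric under $(e_a,f_a)\leftrightarrow(e_a^\star,f_a^\star)$ (for~\eqref{item:star2}, after swapping the roles of $a$ and $b$); for $a\in I^\re$ the quantity $\kappa_a$ is symmetric in $\varepsilon_a$ and $\varepsilon_a^\star$, so~\eqref{item:star3} and~\eqref{item:star4} are unchanged by the swap; and for $a\in I^\im$ the swap carries $\kappa_a(v)$ to $\kappa_a^\star(v)$, so the extra hypothesis $\kappa_a(v) = 0 \iff \kappa_a^\star(v) = 0$ --- together with $\kappa_a(v),\kappa_a^\star(v)\ge 0$ established above, whence also $\kappa_a(v)>0 \iff \kappa_a^\star(v)>0$ --- converts~\eqref{item:star3} and~\eqref{item:star5} into their swapped forms. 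Hence the first assertion applies verbatim to $(B^\star,e_a^\star,f_a^\star,\varepsilon_a^\star,\varphi_a^\star,\wt)$, giving $(B^\star,\ldots)\iso B(\infty)$. Finally, under the isomorphism $B\iso B(\infty)$ the map $\Psi_a$ above becomes a strict crystal embedding $B(\infty)\hookrightarrow B(\infty)\otimes\NN_{(a)}$, which by the uniqueness statement of~\cite[Thm.~4.1]{JKKS07} must coincide with the embedding used in~\cite{L12} to define $e_a^*$ and $f_a^*$; substituting $\Psi_a(v) = (e_a^\star)^{\tep_a^\star(v)}v\otimes z_a(-\tep_a^\star(v))$ into $e_a^*v = \Psi_a^{-1}(v'\otimes z_a(-k+1))$ and $f_a^*v = \Psi_a^{-1}(v'\otimes z_a(-k-1))$ then gives $e_a^\star = e_a^*$ and $f_a^\star = f_a^*$, as claimed.
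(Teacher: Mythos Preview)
Your proposal is correct and follows essentially the same route as the paper: define $\Psi_a(v) = (e_a^\star)^{\tep_a^\star(v)}v \otimes z_a\bigl(-\tep_a^\star(v)\bigr)$, verify it is a strict crystal morphism by treating $b\neq a$ via Condition~\eqref{item:star2} and $b=a$ via a case analysis on $\kappa_a(v)$ (with the key identity $\kappa_a(v)=\varphi_a(v')$ in the imaginary case), invoke Theorem~\ref{thm:JKKS07infrec}, and then deduce the ``moreover'' part by symmetry of the hypotheses under swapping the two structures. The only notable differences are organizational: the paper establishes $B=B^\star$ first and extracts hypotheses~\eqref{item:rec_1}--\eqref{item:rec_3} of Theorem~\ref{thm:JKKS07infrec} \emph{after} constructing all the $\Psi_a$ (using that the composite $\Psi_{\vec a}$ lands in $\{v_0\}\otimes\NN_{(a_1)}\otimes\cdots\otimes\NN_{(a_\ell)}$), whereas you front-load this step; and for the final identification $e_a^\star=e_a^*$, $f_a^\star=f_a^*$, the paper argues by induction on depth while you invoke the uniqueness clause of~\cite[Thm.~4.1]{JKKS07}---these are equivalent.
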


\begin{proof}
We will show our conditions are equivalent for $(B,e_a,f_a,\varepsilon_a,\varphi_a,\wt)$ to those of Theorem~\ref{thm:JKKS07infrec}, and the claim $(B,e_a,f_a,\varepsilon_a,\varphi_a,\wt) \iso B(\infty)$ follows by a similar proof to~\cite[Prop.~2.3]{SalS16II}.

We first assume the conditions of Theorem~\ref{thm:JKKS07infrec} hold for $B$. It is straightforward to see $v_0$ exists. The map $\Psi_a \colon B \longrightarrow B \otimes \NN_{(a)}$ defined by
\begin{equation}
\label{eq:defining_star}
\Psi_a(v) = (e_a^{\star})^k v \otimes f_a^k z_a(0) = v' \otimes z_a(-k),
\end{equation}
where $0 \leq k := \tep_a(v)$, is a strict crystal embedding by our assumptions.
Conditions~(\ref{item:star1}) and~(\ref{item:star2}) follow from the tensor product rule and the definition of $f_a^{\star}$.
The remaining conditions were shown in~\cite[Prop.~1.4]{TW16}\footnote{We have to take the dual crystal and corresponding dual properties, see~\cite[Prop.~2.2]{SalS16II}.} and~\cite[Lemma~4.2]{L12}.


Next, we assume Conditions~(\ref{item:star1}--\ref{item:star5}) hold. We have $B = B^{\star}$ by a similar argument to~\cite[Prop.~2.3]{SalS16II}. Next, we construct a strict crystal embedding $\Psi_a \colon B \lhook\joinrel\longrightarrow B \otimes \NN_{(a)}$. We begin by defining a map $\Psi_a$ by Equation~\eqref{eq:defining_star}.
If $\Psi_a$ is a strict crystal morphism, then we have $\Psi_a$ is an embedding by induction on depth using that $B$ is generated from $v_0$, that $\Psi_a(v_0) = v_0 \otimes z_a(0)$, and that $v_0 \otimes z_a(0)$ is the unique element of weight $0$ in $B \otimes \NN_{(a)}$. Thus, it is sufficient to show that $\Psi_a$ is a strict crystal morphism.

Assume $a \neq b$. Since $\tep_a^{\star}(f_b v) = \tep_a^{\star}(v)$ by Condition~(\ref{item:star2}), then we have $e_a^{\star} v \neq \zero$ if and only if $e_a^{\star} f_b v \neq \zero$. Thus, if $e_a^{\star} v \neq \zero$, we have
\begin{equation}
\label{eq:fe*_commute}
f_b e_a^{\star} v = e_a^{\star} f_a^{\star} f_b e_a^{\star} v = e_a^{\star} f_b f_a^{\star} e_a^{\star} v = e_a^{\star} f_b v
\end{equation}
since $e_a^{\star} f_a^{\star} w = w$ for all $w \in B$ by Condition~(\ref{item:star1}) and the crystal axioms.
Similarly, if $e_a^{\star} e_b v \neq \zero$ (or $e_b e_a^{\star} v \neq \zero$), then we have
\begin{equation}
\label{eq:ee*_commute}
e_a^{\star} e_b v = e_a^{\star} e_b f_a^{\star} e_a^{\star} v = e_a^{\star} e_b f_a^{\star} f_b e_b e_a^{\star} v = e_a^{\star} e_b f_b f_a^{\star} e_b e_a^{\star} v = e_b e_a^{\star} v.
\end{equation}
Note that $\tep_a^{\star}(f_b v) = \tep_a^{\star}(v)$ implies $\tep_a^{\star}(e_b v) = \tep_a^{\star}(v)$ by the crystal axioms, and so we cannot have $e_a^{\star} v = \zero$ and $e_a^{\star} e_b v \neq \zero$.
Therefore, by the tensor product rule, we have
\begin{align*}
\Psi_a(f_b v) & = (e_a^{\star})^k f_b v \otimes z_a(-k)
\\ &= (e_a^{\star})^k f_b (f_a^{\star})^k (e_a^{\star})^k v \otimes z_a(-k)
\\ &= (e_a^{\star})^k (f_a^{\star})^k f_b (e_a^{\star})^k v \otimes z_a(-k)
\\ &= f_b (e_a^{\star})^k v \otimes z_a(-k) 
\\ &= f_b \Psi_a(v)
\end{align*}
and
\[
\Psi_a(e_b v) = (e_a^{\star})^k e_b v \otimes z_a(-k) = e_b (e_a^{\star})^k v \otimes z_a(-k) = e_b \Psi_a(v).
\]
For $a \in I^{\re}$, we have $f_a \Psi_a(v) = \Psi_a(f_a v)$ and $e_a \Psi_a(v) = \Psi_a(e_a v)$ by~\cite[Prop.~1.4]{TW16}.

Hence, we assume $a \in I^{\im}$. We note that
\begin{align*}
\kappa_a(v) &= 0 + k A_{aa} + \langle h_a, \wt(v) \rangle 
\\ &= \bigl\langle h_a, \wt(v) + k\alpha_a \bigr\rangle 
\\ &= \bigl\langle h_a, \wt\bigl( (e_a^{\star})^k v \bigr) \bigr\rangle
\\ &= \varphi_a(v') 
\\ &= \bigl\langle h_a, \wt(v') \bigr\rangle 
\\ &\geq 0.
\end{align*}
By the tensor product rule, we have
\[
f_a \Psi_a(v) = f_a \bigl( v' \otimes z_a(-k) \bigr) =
\begin{cases}
v' \otimes f_a z_a(-k)  & \text{if } \varphi_a(v') = 0, \\
f_a(v') \otimes z_a(-k) & \text{if } \varphi_a(v') > 0.
\end{cases}
\]
We first consider $\kappa_a(v) = 0 = \varphi_a(v')$. Note that $f_a = f_a^{\star}$ implies $\tep_a^{\star} \bigl( f_a v \bigr) = k + 1$ and $(e_a^{\star})^{k+1}(f_a v) = v'$. Therefore, we have $f_a \Psi_a(v) = \Psi_a(f_a v)$ by the definition of $\Psi_a$.
Next, assume $\kappa_a(v) = \varphi_a(v') > 0$, and we note that
\begin{align*}
\kappa_a(e_a^{\star} v) &= A_{aa} \tep_a^{\star}(e_a^{\star} v) + \bigl\langle h_a, \wt(e_a^{\star} v) \bigr\rangle
\\ & = A_{aa} \bigl(\tep_a^{\star}(v) - 1 \bigr) + \bigl\langle h_a, \wt(v) \bigr\rangle + A_{aa} 
\\ & = \kappa_a(v).
\end{align*}
Thus, we have
\[
\Psi_a(f_a v) = (e_a^{\star})^k f_a v \otimes z_a(-k) = f_a (e_a^{\star})^k v \otimes z_a(-k) = f_a \Psi_a(v)
\]
by $\tep_a^{\star}(f_a v) = \tep_a^{\star}(v)$ and Equation~\eqref{eq:fe*_commute} with $b = a$.

Again, by the tensor product rule, we have
\[
e_a\Psi_a(v) = e_a\bigl( v' \otimes z_a(-k) \bigr) = 
\begin{cases}
e_av' \otimes z_a(-k) & \text{if } \varphi_a(v')  > -A_{aa} , \\
\zero & \text{if } 0 < \varphi_a(v') \le -A_{aa}, \\
v' \otimes z_a(-k+1) & \text{if } \varphi_a(v') \le 0.
\end{cases}
\]
If $\kappa_a(v) = \varphi_a(v') = 0$, then $e_a = e_a^{\star}$ and $e_a \Psi_a(v) = \Psi_a(e_a v)$ by the construction of $\Psi_a$ and noting in this case $e_a v = 0$ if and only if $k = 0$. Next, suppose $\kappa_a(v) = \varphi_a(v') > -A_{aa}$, and so we have
\begin{align*}
\Psi_a(e_a v) & = (e_a^{\star})^k e_a v \otimes z_a(-k)
= e_a (e_a^{\star})^k v \otimes z_a(-k) = e_a \Psi_a(v)
\end{align*}
by $\tep_a^{\star}(e_a v) = \tep_a^{\star}(v)$, which follows from Condition~(\ref{item:star5}) and the crystal axioms, and Equation~\eqref{eq:ee*_commute} with $b = a$. Finally, consider the case $0 < \kappa_a(v) = \varphi_a(v') \leq -A_{aa}$. If $e_a v \neq \zero$, then we have
\begin{align*}
\kappa_a(e_a v) & = \tep^{\star}_a(e_a v) A_{aa} + \bigl\langle h_a, \wt(e_a v) \bigr\rangle
\\ &= k A_{aa} + \bigl\langle h_a, \wt(v) \bigr\rangle + A_{aa}
\\ & = \kappa_a(v) + A_{aa} 
\\ &\leq -A_{aa} + A_{aa} 
\\ & = 0,
\end{align*}
where $\tep_a^{\star}(e_a v) = k$ by Condition~(\ref{item:star5}). Since we must have $\kappa_a(w) \geq 0$ for all $w \in B$, we must have $\kappa_a(e_a v) = 0$. Hence, by Condition~(\ref{item:star3}), we have $f_a^{\star} e_a v = f_a e_a v = v$, which implies that $e_a = e_a^{\star}$ and $(e_a^{\star})^{k+1} \neq \zero$. However, this is a contradiction since $(e_a^{\star})^{k+1} v = \zero$ by the definition of $\tep_a^{\star}(v)$. Therefore, we have $e_a \Psi_a(v) = \Psi_a(e_a v)$.

It is straightforward to see that for all $v \in B$, we have
\[
\varepsilon_a\bigl( \Psi_a(v) \bigr) = \varepsilon_a(v),
\qquad
\varphi_a\bigl( \Psi_a(v) \bigr) = \varphi_a(v),
\qquad
\wt\bigl( \Psi_a(v) \bigr) = \wt(v),
\]
from the tensor product rule and the crystal axioms. Thus, $\Psi_a$ is a strict crystal morphism.

Finally, we have that for any $v \in B$, we can write $v = x_{a_1} \cdots x_{a_{\ell}} v_0$, where $a_i \in I$ and $x = e, f$. Since, $\Psi_a$ is a strict crystal morphism, we have
\[
\Psi_{\vec{a}}(v) = \Psi_{\vec{a}}(x_{a_1} \cdots x_{a_{\ell}} v_0) = x_{a_1} \cdots x_{a_{\ell}} \Psi_{\vec{a}}(v_0) \in \{v_0\} \otimes \NN_{(a_1)} \otimes \cdots \otimes \NN_{(a_{\ell})},
\]
where $\Psi_{\vec{a}} = \Psi_{a_1} \circ \cdots \circ \Psi_{a_{\ell}}$. Since $\Psi_a$ is an embedding, we have 
\[
\Psi_{\vec{a}}(v) = v_0 \otimes z_{a_1}(0) \otimes \cdots \otimes z_{a_{\ell}}(0) \text{ if and only if } v = v_0. 
\]
If $v \neq v_0$, then by the tensor product rule, there exists some $b \in I$ such that $\Psi_{\vec{a}}(e_b v) = e_b \Psi_{\vec{a}}(v) \neq \zero$, implying $e_b \neq \zero$. Thus, $v_0$ is the unique highest weight vector of $B$ and $\wt(B) \subseteq Q^-$, and so $(B, e_a, f_a, \varepsilon_a, \varphi_a, \wt) \iso B(\infty)$ follows.

Now additionally suppose $\kappa_a(v) = 0$ if and only if $\kappa_a^{\star}(v) = 0$ for all $a \in I^{\im}$ and $v \in B$. Note that $\kappa_a(f_a^{\star} v) = \kappa_a(v)$, and so $\kappa_a(f_a v) = \kappa_a(v) = 0$ when $\kappa_a(v) = 0$ and
\[
\kappa_a(f_a v) = \kappa_a(v) - A_{aa} \geq \kappa_a(v) > 0
\]
otherwise. Thus, the same conditions of the theorem hold by swapping $e_a$ with $e_a^{\star}$ and $f_a$ with $f_a^{\star}$, and hence
$
(B^\star,e_a^\star,f_a^\star,\varepsilon_a^\star,\varphi_a^\star,\wt) \iso B(\infty).
$
By induction on depth, we have $e_a^* = e_a^{\star}$ and $f_a^* = f_a^{\star}$ by the definition of $e_a^*$ and $f_a^*$.
\end{proof}

\begin{remark}
As the proof of Theorem~\ref{thm:Binf_recog} shows, the conditions given in Theorem~\ref{thm:JKKS07infrec} are actually stronger than needed and can have conditions closer to~\cite[Prop.~3.2.3]{KS97}. Indeed, instead of requiring a unique highest weight element, one can use that there is a unique element of weight $0$ that is \emph{a} highest weight element.
\end{remark}

\begin{remark}
Unlike for $a \in I^{\re}$, the value $\kappa_a$ for $a \in I^{\im}$ does not have the duality under taking the $*$-involution. However, this is expected as the action of $e_a$ and $e_a^*$ are needed to be expressed somewhere in the recognition theorem. In contrast, the action of $e_a$ and $e_a^*$, for $a \in I^{\re}$, was included in the definition of $\varepsilon_a$ and $\varepsilon_a^*$ respectively. Yet, we do obtain the duality by the condition that $\kappa_a(v) = 0$ if and only if $\kappa_a^*(v) = 0$. Additionally, note that $\kappa_a(v) = \kappa_a^*(v)$ for all $a \in I^{\re}$ and $v \in B$.
\end{remark}

\section{Rigged configurations}
\label{sec:RC}

Let $\HH = I \times \ZZ_{>0}$. A rigged configuration is a sequence of partitions $\nu = (\nu^{(a)} \mid a \in I)$ such that each row $\nu_i^{(a)}$ has an integer called a \defn{rigging}, and we let $J = \bigl(J_i^{(a)} \mid (a, i) \in \HH \bigr)$, where $J_i^{(a)}$ is the multiset of riggings of rows of length $i$ in $\nu^{(a)}$. We consider there to be an infinite number of rows of length $0$ with rigging $0$; i.e., $J_0^{(a)} = \{0, 0, \dotsc\}$ for all $a \in I$. The term rigging will be interchanged freely with the term \defn{label}.  We identify two rigged configurations $(\nu, J)$ and $(\widetilde{\nu}, \widetilde{J})$ if 
$\nu = \widetilde{\nu}$ and $J_i^{(a)} = \widetilde{J}_i^{(a)}$
for any fixed $(a, i) \in \HH$. Let $(\nu, J)^{(a)}$ denote the rigged partition $(\nu^{(a)}, J^{(a)})$.

Define the \defn{vacancy numbers} of $\nu$ to be 
\begin{equation}
\label{eq:vacancy}
p_i^{(a)}(\nu) = p_i^{(a)} = - \sum_{(b,j) \in \HH} A_{ab} \min(i, j) m_j^{(b)},
\end{equation}
where $m_i^{(a)}$ is the number of parts of length $i$ in $\nu^{(a)}$ and $(A_{ab})_{a,b\in I}$ is the underlying Borcherds--Cartan matrix.  The \defn{corigging}, or \defn{colabel}, of a row in $(\nu,J)^{(a)}$ with rigging $x$ is $p_i^{(a)} - x$.  In addition, we can extend the vacancy numbers to
\[
p_{\infty}^{(a)} = \lim_{i\to\infty} p_i^{(a)} =  - \sum_{b \in I} A_{ab} \lvert \nu^{(b)} \rvert
\]
since $\sum_{j=1}^{\infty} \min(i,j) m_j^{(b)} = \lvert \nu^{(b)} \rvert$ for $i \gg 1$.  Note this is consistent with letting $i = \infty$ in Equation~\eqref{eq:vacancy}.

Let $\RC(\infty)$ denote the set of rigged configurations generated by $(\nu_{\emptyset}, J_{\emptyset})$, where $\nu_{\emptyset}^{(a)} = 0$ for all $a \in I$, and closed under the operators $e_a$ and $f_a$ $(a\in I)$ defined next.  Recall that, in our convention, $x \le 0$ since there the string $(0,0)$ is in each $(\nu,J)^{(a)}$.

\begin{dfn}
\label{def:RC_crystal_ops}
Fix some $a \in I$. Let $x$ be the smallest rigging in $(\nu, J)^{(a)}$.
\begin{itemize}
\item[\defn{$e_a$}:] We initially split this into two cases:
  \begin{itemize}[leftmargin=.6in]
  \item[$a \in I^{\re}$:]  If $x = 0$, then $e_a(\nu, J) = \zero$. Otherwise, let $r$ be a row in $(\nu, J)^{(a)}$ of minimal length $\ell$ with rigging $x$.
  \item[$a \in I^{\im}$:] If $\nu^{(a)} = \emptyset$ or $x \neq -A_{aa}/2$, then $e_a(\nu, J) = \zero$. Otherwise let $r$ be the row with rigging $-A_{aa}/2$. 
  \end{itemize}
  If $e_a(\nu, J) \neq \zero$, then $e_a(\nu, J)$ is the rigged configuration that removes a box from row $r$, sets the new rigging of $r$ to be $x + A_{aa}/2$, and changes all other riggings such that the coriggings remain fixed.

\item[\defn{$f_a$}:] Let $r$ be a row in $(\nu, J)^{(a)}$ of maximal length $\ell$ with rigging $x$. Then $f_a(\nu, J)$ is the rigged configuration that adds a box to row $r$, sets the new rigging of $r$ to be $x - A_{aa}/2$, and changes all other riggings such that the coriggings remain fixed.
\end{itemize}
\end{dfn}

We note that explicitly, the other riggings $x \in (\nu, J)^{(b)}$ in a row of length $i$ are changed by $f_a$ according to
\[
x' = \begin{cases} x & \text{if } i \leq \ell, \\ x - A_{ab} & \text{if } i > \ell, \end{cases}
\]
and by $e_a$ according to
\[
x' = \begin{cases} x & \text{if } i <\ell, \\ x + A_{ab} & \text{if } i \geq \ell, \end{cases}
\]
where $\ell$ is the length of the row that was changed.

Define the following additional maps on $\RC(\infty)$ by
\begin{align*}
\varepsilon_a(\nu, J) &= 
\begin{cases}
 \max \{ k \in \ZZ \mid e_a^k(\nu, J) \neq 0 \} & \text{if } a \in I^\re \\
 0 & \text{if } a \in I^\im,
\end{cases}\\
\varphi_a(\nu, J) &= \inner{h_a}{\wt(\nu,J)} + \varepsilon_a(\nu, J),\\
\wt(\nu, J) &= -\sum_{a \in I} \lvert \nu^{(a)} \rvert \alpha_a.
\end{align*}
From this structure, we have $p_\infty^{(a)} = \inner{h_a}{\wt(\nu,J)}$ for all $a \in I$.

\begin{lemma}
\label{lemma:imaginary_partition}
Suppose $a \in I^{\im}$ and $(\nu, J) \in \RC(\infty)$. Then $\nu^{(a)} = (1^k)$, for some $k \ge 0$, and $x \geq -A_{aa}/2$ for any string $(i, x)$ such that $i =1$.
\end{lemma}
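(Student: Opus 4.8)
The plan is to induct on the total size $|\nu| := \sum_{b \in I} \lvert \nu^{(b)} \rvert$. First I would record that $\RC(\infty)$ is exactly the set of rigged configurations obtained from $(\nu_{\emptyset}, J_{\emptyset})$ by chains of the lowering operators $f_b$ alone, so that every $(\nu, J) \in \RC(\infty)$ with $|\nu| > 0$ may be written $(\nu, J) = f_b(\nu', J')$ with $(\nu', J') \in \RC(\infty)$ and $|\nu'| = |\nu| - 1$; if this reduction is not directly available, one instead inducts on the length of an arbitrary generating word and disposes of an $e_b$-step by the same bookkeeping, using that $e_b$ partially inverts $f_b$. The base case $|\nu| = 0$ forces $(\nu, J) = (\nu_{\emptyset}, J_{\emptyset})$, so $\nu^{(a)} = \emptyset = (1^0)$ and there are no strings of length $1$: both assertions hold vacuously.

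For the inductive step, fix $a \in I^{\im}$ and write $(\nu, J) = f_b(\nu', J')$ as above, assuming by induction that $\nu'^{(a)} = (1^k)$ and that every length-$1$ row of $(\nu', J')^{(a)}$ has rigging $\geq -A_{aa}/2$ (note $-A_{aa}/2 \geq 0$). If $b \neq a$, then $f_b$ leaves $\nu^{(a)}$ unchanged, so $\nu^{(a)} = (1^k)$, and by the displayed rigging-update rule for $f_b$ each length-$1$ row of $(\nu, J)^{(a)}$ has rigging equal to its old value plus $0$ or $-A_{ab} \geq 0$, hence still $\geq -A_{aa}/2$. If $b = a$, the key point is that the maximal-length row of $(\nu', J')^{(a)}$ carrying the smallest rigging $x$ has length $\ell = 0$: the length-$1$ rows carry rigging $\geq -A_{aa}/2 \geq 0$, while there are always rows of length $0$ and rigging $0$, so $x = 0$ and no length-$1$ row attains it. Thus $f_a$ turns a length-$0$ row into a new length-$1$ row, giving $\nu^{(a)} = (1^{k+1})$; that new row gets rigging $x - A_{aa}/2 = -A_{aa}/2$, and, since the $k$ old length-$1$ rows have length $1 > \ell = 0$, their riggings each increase by $-A_{aa} \geq 0$ and so remain $\geq -A_{aa}/2$. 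This closes the induction.

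The step I expect to be the main obstacle is exactly this claim, in the case $b = a$, that $f_a$ acts on a length-$0$ row rather than lengthening an existing length-$1$ row of $\nu^{(a)}$: it hinges on the inductive bound $-A_{aa}/2 \geq 0$ on those riggings together with the ever-present $(0,0)$ strings, and it is here that the precise convention for the ``smallest rigging'' (in particular, whether the length-$0$ strings are included) must be applied consistently with the definitions of $e_a$ and $f_a$ in Definition~\ref{def:RC_crystal_ops}, the isotropic situation $A_{aa} = 0$ being the delicate one. The remaining ingredients are routine: only the $e_b$- and $f_b$-rigging-update formulas recorded right after Definition~\ref{def:RC_crystal_ops}, together with the sign constraints $A_{ab} \leq 0$ for $a \neq b$ and $A_{aa} \leq 0$ for $a \in I^{\im}$, are used.
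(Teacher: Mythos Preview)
Your proposal is correct and follows exactly the paper's approach: the paper's proof reads in full ``This is a straightforward induction on depth and by the definition of the crystal operators,'' and your argument is precisely that induction spelled out. Your caution about the isotropic case $A_{aa}=0$ is well placed, but the paper does not treat it separately either.
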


\begin{proof}
This is a straightforward induction on depth and by the definition of the crystal operators.
\end{proof}

\begin{prop}
\label{prop:RC_is_crystal}
With the operations above, $\RC(\infty)$ is an abstract $U_q(\g)$-crystal.
\end{prop}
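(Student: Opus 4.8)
The plan is to verify, one by one, the seven conditions defining an abstract $U_q(\g)$-crystal, treating $a \in I^{\re}$ and $a \in I^{\im}$ separately and invoking Lemma~\ref{lemma:imaginary_partition} to control $(\nu, J)^{(a)}$ when $a$ is imaginary. First I would dispose of the routine conditions: Condition~(3) is built into the definition of $\varphi_a$; Conditions~(1) and~(2) hold because $e_a$ and $f_a$ modify only the $a$-th rigged partition, changing $\lvert \nu^{(a)} \rvert$ by $\mp 1$ and hence $\wt$ by $\pm \alpha_a$; and Condition~(7) is vacuous because $\varepsilon_a$ and $\varphi_a$ are $\ZZ$-valued. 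En route one checks that $e_a$ and $f_a$ are well defined, i.e., independent of which minimal-length (resp.\ maximal-length) row of smallest rigging is chosen, which is immediate from the identification of rigged configurations, and that $\varepsilon_a(\nu, J) \le \lvert \nu^{(a)} \rvert < \infty$ for $a \in I^{\re}$, so that $\varepsilon_a$ makes sense.

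The heart of the argument is Condition~(4), that $e_a$ and $f_a$ are mutual inverses wherever nonzero. For $a \in I^{\re}$ (so $A_{aa} = 2$), the operators restrict to exactly those of the rigged configuration model for $B(\infty)$ of the symmetrizable Kac--Moody algebra with Cartan matrix $A|_{I^{\re}}$, and the argument of~\cite{SalS15} applies: after $f_a$ adds a box to a maximal-length row $r$ of smallest rigging $x$ and length $\ell$, turning it into a row of length $\ell + 1$ and rigging $x - 1$, one shows --- using that the coriggings of all other rows are unchanged --- that this row is the one $e_a$ acts on, so $e_a f_a(\nu, J) = (\nu, J)$, and symmetrically $f_a e_a(\nu, J) = (\nu, J)$. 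The only thing to check is that the imaginary nodes of $I$ play no role here, which holds since the analysis lives entirely within $(\nu, J)^{(a)}$. For $a \in I^{\im}$, Lemma~\ref{lemma:imaginary_partition} says $(\nu, J)^{(a)}$ is a single column whose genuine riggings are all $\ge -A_{aa}/2$, so $f_a$ simply turns a length-$0$ row into a length-$1$ row of rigging $-A_{aa}/2$ (shifting the other genuine riggings), while $e_a$, when nonzero, removes precisely the length-$1$ row of smallest rigging $-A_{aa}/2$; a short direct computation shows these undo one another.

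Conditions~(5) and~(6) follow quickly once~(4) is in hand. For $a \in I^{\re}$, $\varepsilon_a$ is by definition $\max\{k : e_a^k(\nu, J) \neq \zero\}$, so Condition~(4) immediately gives $\varepsilon_a(e_a v) = \varepsilon_a(v) - 1$ and $\varepsilon_a(f_a v) = \varepsilon_a(v) + 1$, whence the $\varphi_a$-statements by Conditions~(1)--(3) and $\langle h_a, \alpha_a \rangle = 2$. For $a \in I^{\im}$, the $\varepsilon_a$-statements are trivial since $\varepsilon_a \equiv 0$, and Conditions~(1)--(3) together with $\langle h_a, \alpha_a \rangle = A_{aa}$ yield $\varphi_a(e_a v) = \varphi_a(v) + A_{aa}$ and $\varphi_a(f_a v) = \varphi_a(v) - A_{aa}$.

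The step I expect to be the main obstacle is Condition~(4) for $a \in I^{\re}$: the bookkeeping of how the riggings in $(\nu, J)^{(b)}$, $b \neq a$ (including imaginary $b$), shift under $e_a$ and $f_a$, and the verification that the box added by $f_a$ is exactly the box removed by $e_a$. As this is the substance of the $B(\infty)$ rigged configuration theorem of~\cite{SalS15}, the actual work is to confirm it is unaffected by the presence of imaginary nodes and to treat the imaginary indices through Lemma~\ref{lemma:imaginary_partition}.
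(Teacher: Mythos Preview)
Your proposal is correct and follows essentially the same approach as the paper. The paper's own proof is much terser: it invokes \cite[Lemma~3.3]{SalS15} together with ``the definitions'' to reduce everything to the two statements $e_af_a(\nu,J)=(\nu,J)$ and $f_ae_a(\nu,J)=(\nu,J)$ for $a\in I^{\im}$, and then declares these straightforward from the crystal operators---exactly the split you propose (real case by \cite{SalS15}, imaginary case by a direct check via Lemma~\ref{lemma:imaginary_partition}), only without your axiom-by-axiom commentary.
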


\begin{proof}
From~\cite[Lemma~3.3]{SalS15} and the definitions, we only need to show that
\begin{enumerate}
\item\label{item:abstract1} For any $a \in I^{\im}$, we have $e_a f_a(\nu, J) = (\nu, J)$.

\item\label{item:abstract2} If $e_a(\nu, J) \neq \zero$ for some $a \in I^{\im}$, we have $f_a e_a(\nu, J) = (\nu, J)$.
\end{enumerate}

Both of these properties are straightforward from the crystal operators.
\end{proof}

\begin{prop}
\label{prop:ep_phi}
Let $(\nu, J) \in \RC(\infty)$ and fix some $a \in I$. Let $x \le 0$ denote the smallest label in $(\nu,J)^{(a)}$. Then we have
\[
\varepsilon_a(\nu, J) = -x \hspace{40pt} \varphi_a(\nu, J) = p_{\infty}^{(a)} - x.
\]
\end{prop}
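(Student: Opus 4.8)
The plan is to reduce the whole statement to the computation of $\varepsilon_a$. Indeed, $\varphi_a(\nu, J) = \inner{h_a}{\wt(\nu,J)} + \varepsilon_a(\nu,J)$ by definition, and $p_\infty^{(a)} = \inner{h_a}{\wt(\nu,J)}$ as recorded just above the statement, so the formula $\varphi_a(\nu,J) = p_\infty^{(a)} - x$ is immediate once we establish $\varepsilon_a(\nu,J) = -x$. We also note that $x \le 0$ always holds, since every $(\nu,J)^{(a)}$ contains the ambient strings $(0,0)$.

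For $a \in I^\im$ the argument is immediate from Lemma~\ref{lemma:imaginary_partition}: the partition $\nu^{(a)}$ has only rows of length $1$, each with rigging $\ge -A_{aa}/2 \ge 0$, so together with the $(0,0)$ strings the smallest label of $(\nu,J)^{(a)}$ is exactly $0$; hence $x = 0 = \varepsilon_a(\nu,J)$, the last equality being the definition of $\varepsilon_a$ at an imaginary node.

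For $a \in I^\re$, I would argue by induction on $-x \in \ZZ_{\ge 0}$. If $x = 0$, then $e_a(\nu,J) = \zero$ and so $\varepsilon_a(\nu,J) = 0 = -x$. If $x < 0$, then $e_a(\nu,J) \neq \zero$, and the crux is to show that the smallest label of $e_a(\nu,J)^{(a)}$ is $x+1$. Granting this, $e_a(\nu,J) \in \RC(\infty)$, so the inductive hypothesis (applied with $x+1$ in place of $x$) gives $\varepsilon_a(e_a(\nu,J)) = -(x+1)$, whence $\varepsilon_a(\nu,J) = \varepsilon_a(e_a(\nu,J)) + 1 = -x$. To see the crux, note $A_{aa}/2 = 1$ and inspect the rows of $(\nu,J)^{(a)}$ after applying $e_a$: the shortened row $r$, of minimal length $\ell \ge 1$ among rows of rigging $x$, acquires rigging $x+1$; every row of length $< \ell$ retains its rigging, which is $\ge x+1$ by the minimality of $\ell$ among rows of rigging $x$ together with integrality of riggings (in particular each $(0,0)$ string, having length $0 < \ell$, keeps rigging $0 \ge x+1$ since $x \le -1$); and every remaining row of length $\ge \ell$ has its rigging raised by $A_{aa} = 2$ in order to keep coriggings fixed, hence is $\ge x+2$. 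Thus the new minimum label is exactly $x+1$.

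I expect this crux — the behavior of the minimal label of $(\nu,J)^{(a)}$ under $e_a$ for $a \in I^\re$ — to be the only substantive step; it amounts to combining the explicit rule by which the non-selected riggings change under $e_a$ (namely $x \mapsto x + A_{ab}$ for rows of length $\ge \ell$) with the minimality of $\ell$ and integrality, and one should additionally check that the boundary case $\ell = 1$, in which $r$ becomes a row of length $0$ with rigging $x+1 \le 0$, is harmless for the count (it does not affect which label is the smallest). Everything else is a direct unwinding of the definitions.
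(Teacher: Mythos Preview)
Your argument is essentially correct and follows the same line as the paper: for $a \in I^{\im}$ you invoke Lemma~\ref{lemma:imaginary_partition} exactly as the paper does, and for $a \in I^{\re}$ the paper simply cites \cite{Sakamoto14,SalS15,S06,SchillingS15} while you unpack the standard inductive argument those references contain.

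One point deserves more care. In the boundary case $\ell = 1$ you write that row $r$ ``becomes a row of length $0$ with rigging $x+1 \le 0$'' and that this is ``harmless for the count.'' But the paper's convention fixes $J_0^{(a)} = \{0,0,\dotsc\}$, so a length-$0$ row never carries a nonzero rigging; after $e_a$ the row $r$ simply disappears into the ambient $(0,0)$ strings. If one genuinely had $\ell = 1$ with $x < -1$, then the remaining riggings in $e_a(\nu,J)^{(a)}$ would all be $\ge \min(0,x+2)$, not $x+1$, and your induction would break. The correct resolution is that this situation cannot occur for $(\nu,J)\in\RC(\infty)$: since $f_a e_a(\nu,J) = (\nu,J)$ by Proposition~\ref{prop:RC_is_crystal}, writing $(\widetilde\nu,\widetilde J) = e_a(\nu,J)$ and tracing through the definition of $f_a$ shows that the row of $(\nu,J)^{(a)}$ carrying the minimal rigging $x$ at minimal length $\ell$ must have $\ell \ge \widetilde\ell + 1$, where $\widetilde\ell$ is the length of the row $f_a$ acts on in $(\widetilde\nu,\widetilde J)$. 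Thus $\ell = 1$ forces $\widetilde\ell = 0$, hence $\widetilde x = 0$ and $x = \widetilde x - 1 = -1$. So in the boundary case the new rigging is indeed $0$, consistent with the convention, and your inductive step goes through. You should replace the ``harmless'' remark with this observation (or simply note, as the paper does, that the real case is already in the literature).
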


\begin{proof}
For $a \in I^{\re}$, this was shown in~\cite{Sakamoto14,SalS15,S06,SchillingS15}. For $a \in I^{\im}$, this follows from Lemma~\ref{lemma:imaginary_partition}.
\end{proof}

%
%
%

\begin{dfn}
\label{def:RC_star_crystal_ops}
Fix some $a \in I$. Let $x$ be the smallest corigging in $(\nu, J)^{(a)}$. 
\begin{itemize}
\item[\defn{$e_a^*$}:] We initially split this into two cases:
  \begin{itemize}[leftmargin=.6in]
  \item[$a \in I^{\re}$:] If $x = 0$, then $e_a^*(\nu, J) = \zero$. Otherwise, let $r$ be a row in $(\nu, J)^{(a)}$ of minimal length $\ell$ with corigging $x$.
  \item[$a \in I^{\im}$:] If $\nu^{(a)} = \emptyset$ or $x\neq -A_{aa}/2$, then $e_a^*(\nu, J) = \zero$. Otherwise let $r$ be the row with corigging $-A_{aa}/2$.
  \end{itemize}
  If $e_a^*(\nu, J) \neq \zero$, then $e_a^*(\nu, J)$ is the rigged configuration that removes a box from row $r$, sets the rigging of $r$ so that the corigging is $x - A_{aa}/2$, and keeps all other riggings fixed.

\item[\defn{$f_a^*$}:] Let $r$ be a row in $(\nu, J)^{(a)}$ of maximal length $\ell$ with corigging $x$. Then $f_a^*(\nu, J)$ is the rigged configuration that adds a box to row $r$, sets the rigging of $r$ so that the corigging is $x - A_{aa}/2$, and keeps all other riggings fixed.
\end{itemize}
\end{dfn}

%

Let $\RC(\infty)^*$ denote the closure of $(\nu_{\emptyset}, J_{\emptyset})$ under $f_a^*$ and $e_a^*$. We define the remaining crystal structure by
\begin{align*}
\varepsilon_a^*(\nu, J) &= 
\begin{cases}
\max \{ k \in \ZZ \mid (e_a^*)^k(\nu, J) \neq 0 \} & \text{if } a \in I^\re, \\
0 & \text{if } a \in I^\im,
\end{cases} \\
\varphi_a^*(\nu, J) &= \inner{h_a}{\wt(\nu,J)} + \varepsilon_a^*(\nu, J),\\
\wt(\nu, J) &= -\sum_{a \in I} \lvert \nu^{(a)} \rvert \alpha_a.
\end{align*}

\begin{remark}
\label{remark:duality}
We will say an argument holds by duality when we can interchange:
\begin{itemize}
\item ``rigging'' and ``corigging'';
\item $e_a$ and $e_a^*$;
\item $f_a$ and $f_a^*$.
\end{itemize}
\end{remark}

The following two statements hold by duality with Proposition~\ref{prop:RC_is_crystal} and Proposition~\ref{prop:ep_phi} respectively.

\begin{prop}
The tuple $(\RC(\infty)^*, e_a^*, f_a^*, \varepsilon_a^*, \varphi_a^*, \wt)$ is an abstract $U_q(\g)$-crystal.
\end{prop}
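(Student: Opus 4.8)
The plan is to derive the statement from Proposition~\ref{prop:RC_is_crystal} via the duality principle of Remark~\ref{remark:duality}. The point is that Definition~\ref{def:RC_star_crystal_ops}, together with the displayed formulas for $\varepsilon_a^*$, $\varphi_a^*$, and $\wt$, is obtained from Definition~\ref{def:RC_crystal_ops} and the formulas for $\varepsilon_a$, $\varphi_a$, $\wt$ by the substitutions ``rigging''$\leftrightarrow$``corigging'', $e_a \leftrightarrow e_a^*$, $f_a \leftrightarrow f_a^*$. Consequently every assertion and computation in the proof of Proposition~\ref{prop:RC_is_crystal} has a mirror image, and it suffices to note that the ingredients that proof invokes survive the substitution: \cite[Lemma~3.3]{SalS15} is a general criterion that does not distinguish riggings from coriggings, and the two remaining identities $e_a^* f_a^*(\nu, J) = (\nu, J)$ and $f_a^* e_a^*(\nu, J) = (\nu, J)$ (for $a \in I^{\im}$, the latter whenever $e_a^*(\nu, J) \neq \zero$) are immediate from the explicit descriptions in Definition~\ref{def:RC_star_crystal_ops}, just as in Proposition~\ref{prop:RC_is_crystal}; here one uses the dual of Lemma~\ref{lemma:imaginary_partition}, namely that for $a \in I^{\im}$ one has $\nu^{(a)} = (1^k)$ and every length-$1$ string of $(\nu,J)^{(a)}$ has corigging $\geq -A_{aa}/2$.

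To make the duality concrete I would introduce the involution $*$ on the set of all rigged configurations (for the fixed Borcherds--Cartan datum) that leaves $\nu$ unchanged and replaces the rigging $x$ of a length-$i$ row in component $a$ by its corigging $p_i^{(a)}(\nu) - x$. Since the vacancy numbers depend only on $\nu$, this is a well-defined involution; it preserves $\wt$ and fixes $(\nu_\emptyset, J_\emptyset)$. Comparing Definition~\ref{def:RC_crystal_ops} and Definition~\ref{def:RC_star_crystal_ops} term by term then gives $* \circ e_a = e_a^* \circ *$ and $* \circ f_a = f_a^* \circ *$; the crucial observation in this comparison is that ``changing all other riggings so that the coriggings remain fixed'' under $e_a, f_a$ corresponds, after applying $*$, to ``keeping all other riggings fixed'' under $e_a^*, f_a^*$, because adding or removing a single box from $\nu^{(a)}$ shifts the vacancy numbers in a controlled way that $*$ converts precisely between the two conventions. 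From these intertwining relations it follows that $*$ restricts to a bijection $\RC(\infty) \to \RC(\infty)^*$ (both sides being the $e,f$- resp.\ $e^*,f^*$-closure of the $*$-fixed element $(\nu_\emptyset,J_\emptyset)$), and that it is an isomorphism of abstract crystals from $(\RC(\infty), e_a, f_a, \varepsilon_a, \varphi_a, \wt)$ onto $(\RC(\infty)^*, e_a^*, f_a^*, \varepsilon_a^*, \varphi_a^*, \wt)$: the identities $\varepsilon_a^* \circ * = \varepsilon_a$ and $\varphi_a^* \circ * = \varphi_a$ are forced, since $\varepsilon_a^*, \varphi_a^*$ are built from $e_a^*$ in exact parallel with $\varepsilon_a, \varphi_a$ from $e_a$. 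As the source is an abstract $U_q(\g)$-crystal by Proposition~\ref{prop:RC_is_crystal}, so is the target.

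I do not expect a serious obstacle. The only step requiring genuine care is the term-by-term comparison of the two definitions of the crystal operators, where one must track exactly how the vacancy numbers---and hence the coriggings of the rows whose riggings are ``kept fixed''---are altered when a single box is added to or removed from $\nu^{(a)}$. This, however, is the same computation already performed in its primal form for Definition~\ref{def:RC_crystal_ops}, and carrying it out confirms that $*$ intertwines the two families of operators, which is all that is needed.
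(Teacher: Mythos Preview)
Your proposal is correct and follows the same approach as the paper: the paper simply states that the result ``holds by duality with Proposition~\ref{prop:RC_is_crystal}'' in the sense of Remark~\ref{remark:duality}, which is exactly your first paragraph. Your second and third paragraphs go further and make the duality explicit via the rigging/corigging involution $*$; this is fine, but note that in the paper this explicit description of $*$ is established only later (as a corollary of Theorem~\ref{thm:RCisBinf}), so for the purposes of this proposition the paper is content with the formal word-for-word substitution rather than an actual bijection.
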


\begin{prop}
\label{prop:ep_phi_star}
Let $(\nu, J) \in \RC(\infty)$ and fix some $a \in I$. Let $x$ denote the smallest corigging in $(\nu,J)^{(a)}$. Then we have
\[
\varepsilon_a^*(\nu, J) = -\min(0, x), \hspace{40pt} \varphi_a^*(\nu, J) = p_{\infty}^{(a)} - \min(0, x).
\]
\end{prop}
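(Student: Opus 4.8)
The plan is to derive Proposition~\ref{prop:ep_phi_star} from Proposition~\ref{prop:ep_phi} by the duality principle recorded in Remark~\ref{remark:duality}, but the statement is phrased in terms of $x$ being the smallest \emph{corigging}, so some care is needed with what gets interchanged. Concretely, under the $*$-involution, Definition~\ref{def:RC_star_crystal_ops} is obtained from Definition~\ref{def:RC_crystal_ops} by swapping the roles of riggings and coriggings; in particular $e_a^*$ (for $a \in I^{\re}$) acts when the smallest corigging is nonzero, and otherwise kills the element, exactly as $e_a$ does with the smallest rigging. Applying Proposition~\ref{prop:ep_phi} verbatim with ``rigging'' replaced by ``corigging'' gives $\varepsilon_a^*(\nu, J) = -x$ and $\varphi_a^*(\nu, J) = p_\infty^{(a)} - x$, where $x$ is the smallest corigging. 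The only subtlety is that, unlike for riggings, the smallest corigging need not be $\le 0$: our convention forces the string $(0,0)$ into each $(\nu,J)^{(a)}$, which has corigging $p_0^{(a)} - 0 = p_0^{(a)} = 0$, so the smallest corigging is $\le 0$ as well --- wait, that is not automatic, since other coriggings could be positive while this one is $0$, so the smallest corigging is indeed $\le 0$. Hence the $\min(0,x)$ in the statement is actually equal to $x$ and the formula matches the dual of Proposition~\ref{prop:ep_phi}.

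First I would invoke Lemma~\ref{lemma:imaginary_partition} (and its dual, which holds by the same induction on depth) to dispatch the case $a \in I^{\im}$: there $\varepsilon_a^*$ is defined to be $0$, and $\varphi_a^*(\nu,J) = \langle h_a, \wt(\nu,J)\rangle + 0 = p_\infty^{(a)}$; meanwhile, by the dual of Lemma~\ref{lemma:imaginary_partition} every row of $\nu^{(a)}$ has corigging $\ge -A_{aa}/2 \ge 0$ and the string $(0,0)$ contributes corigging $0$, so the smallest corigging $x$ satisfies $x = 0$ when $\nu^{(a)} \ne \emptyset$ has all coriggings nonnegative, hence $\min(0,x) = 0$ and the asserted formula reduces to $\varepsilon_a^*(\nu,J) = 0$ and $\varphi_a^*(\nu,J) = p_\infty^{(a)}$, which is what we have. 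Then I would treat $a \in I^{\re}$ by a direct translation of the argument in the references cited in the proof of Proposition~\ref{prop:ep_phi} (namely~\cite{Sakamoto14,SalS15,S06,SchillingS15}), with riggings replaced by coriggings throughout: repeatedly applying $e_a^*$ lowers the smallest corigging by $1$ each time (since $e_a^*$ sets the new corigging to $x - A_{aa}/2 = x - 1$ and, by the explicit rule that the \emph{other} riggings are kept fixed, one checks the new smallest corigging in $(\nu,J)^{(a)}$ is $x+1$ after removing a box from the minimal-length row carrying it), so $\varepsilon_a^* = -x$; and then $\varphi_a^* = \langle h_a, \wt(\nu,J)\rangle + \varepsilon_a^* = p_\infty^{(a)} - x = p_\infty^{(a)} - \min(0,x)$ since $x \le 0$.

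The step I expect to be the genuine (if modest) obstacle is verifying that under $e_a^*$ the smallest corigging in $(\nu, J)^{(a)}$ really does increase by exactly $1$, i.e.\ that the box-removal in Definition~\ref{def:RC_star_crystal_ops} interacts with the vacancy numbers in the $*$-setting the way the rigging-version does in Proposition~\ref{prop:ep_phi}. This is where the asymmetry between Definition~\ref{def:RC_crystal_ops} and Definition~\ref{def:RC_star_crystal_ops} shows up: $f_a$ changes other riggings but keeps coriggings fixed, whereas $f_a^*$ keeps riggings fixed (and thus changes coriggings), so ``duality'' is not a literal symmetry of the two definitions and the book-keeping must be redone. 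However, since the vacancy number $p_i^{(a)}$ depends only on $\nu$ (not on $J$), and since removing a single box from a length-$\ell$ row of $\nu^{(a)}$ changes $p_i^{(a)}$ by a controlled amount ($-A_{aa}$ for $i \ge \ell$, appropriately for shorter rows), the corigging $p_i^{(a)} - x$ of the affected row moves by exactly the amount needed; I would write this out once, citing the analogous computation already done for $e_a$ in the real case, and then conclude. Everything else is bookkeeping identical to the non-starred case, so the proof reduces to: (i) the imaginary case via (the dual of) Lemma~\ref{lemma:imaginary_partition}; (ii) the observation that the smallest corigging is $\le 0$ so $\min(0,x) = x$; and (iii) the translation of the real-case computation of $\varepsilon_a$ from Proposition~\ref{prop:ep_phi} with ``rigging'' replaced by ``corigging''.
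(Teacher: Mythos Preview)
Your approach is essentially the same as the paper's: the paper simply states that Proposition~\ref{prop:ep_phi_star} ``holds by duality with Proposition~\ref{prop:ep_phi}'' (Remark~\ref{remark:duality}) and gives no further details. Your observation that the $(0,0)$ string has corigging $p_0^{(a)}-0=0$, so the smallest corigging $x$ is automatically $\leq 0$ and $\min(0,x)=x$, is a correct and useful reconciliation of the asymmetric phrasing between the two propositions that the paper does not spell out.

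One caution: your detailed tracking of how the smallest corigging moves under $e_a^*$ is muddled (you write that $e_a^*$ sets the new corigging of the affected row to $x - A_{aa}/2 = x-1$ and then in the same breath that the new smallest corigging is $x+1$). You do not actually need this computation: the whole point of the duality principle is that Definition~\ref{def:RC_star_crystal_ops} is obtained from Definition~\ref{def:RC_crystal_ops} by the formal substitution ``rigging $\leftrightarrow$ corigging'', so the proof of Proposition~\ref{prop:ep_phi} (and of Lemma~\ref{lemma:imaginary_partition}) transports verbatim without redoing any bookkeeping. If you want to expand the argument, it is cleaner to state the duality once at the level of the definitions and then invoke Proposition~\ref{prop:ep_phi} wholesale, rather than re-running the $\varepsilon_a$ computation by hand.
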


Now we prove our main result.

\begin{thm}
\label{thm:RCisBinf}
As $U_q(\g)$-crystals, $\RC(\infty) \cong B(\infty)$ and
\[
e_a^* = * \circ e_a \circ *,
\qquad\qquad
f_a^* = * \circ f_a \circ *.
\]
\end{thm}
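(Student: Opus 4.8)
The plan is to obtain both assertions at once from the recognition theorem, Theorem~\ref{thm:Binf_recog}, applied to the bicrystal whose first structure is $B = \RC(\infty)$ with $(e_a, f_a, \varepsilon_a, \varphi_a, \wt)$ and whose second structure is $B^\star = \RC(\infty)^*$ with $(e_a^*, f_a^*, \varepsilon_a^*, \varphi_a^*, \wt)$, both regarded inside the set of all rigged configurations. Both are connected abstract $U_q(\g)$-crystals (by Proposition~\ref{prop:RC_is_crystal} and its dual) generated by $(\nu_\emptyset, J_\emptyset)$, which is their common highest weight element and the unique element of weight $0$ (since $\wt(\nu, J) = 0$ forces every $\nu^{(a)} = \emptyset$), and the maps $\varepsilon_a$, $\varepsilon_a^*$ are those prescribed by Equation~\eqref{eq:B_inf_ep} by their definitions together with Propositions~\ref{prop:ep_phi} and~\ref{prop:ep_phi_star}. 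Using $\inner{h_a}{\wt(\nu, J)} = p_\infty^{(a)}$, these propositions also let us rewrite Equation~\eqref{eq:jump}: for $a \in I^\re$,
\[
\kappa_a(\nu, J) = p_\infty^{(a)} - x_{\mathrm{rig}} - x_{\mathrm{cor}},
\]
where $x_{\mathrm{rig}} \le 0$ and $x_{\mathrm{cor}} \le 0$ are the smallest rigging and smallest corigging in $(\nu, J)^{(a)}$, while for $a \in I^\im$ Lemma~\ref{lemma:imaginary_partition} reduces $\kappa_a(\nu, J)$ and $\kappa_a^\star(\nu, J)$ to explicit expressions in $\lvert \nu^{(a)} \rvert$ and $p_\infty^{(a)}$. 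Once the hypotheses of Theorem~\ref{thm:Binf_recog} are checked, its conclusion yields simultaneously $\RC(\infty) \iso B(\infty)$, $\RC(\infty)^* = \RC(\infty)$, and $e_a^* = {*} \circ e_a \circ {*}$ and $f_a^* = {*} \circ f_a \circ {*}$, which is precisely the statement of the theorem.

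Hypotheses~(\ref{item:star1}) and~(\ref{item:star2}) are the bookkeeping part. Hypothesis~(\ref{item:star1}) holds because $f_a$ and $f_a^*$ always add a box, using that $J_0^{(a)} = \{0, 0, \dotsc\}$ is available. For hypothesis~(\ref{item:star2}) with $a \neq b$, note that $f_b$ changes only $\nu^{(b)}$ and adjusts the riggings in $(\nu, J)^{(a)}$ so that its coriggings are unchanged, whereas $f_a^*$ changes only $\nu^{(a)}$ and, by Definition~\ref{def:RC_star_crystal_ops}, leaves every rigging outside $(\nu, J)^{(a)}$ fixed; since $f_a^*$ is driven by coriggings of $(\nu, J)^{(a)}$, which $f_b$ does not disturb, it selects the same row and prescribes the same corigging in either order, giving $f_a^* f_b = f_b f_a^*$ after tracking the compensating rigging shifts. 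The same observations give $\tep_a^\star(f_b v) = \tep_a^\star(v)$ (the coriggings of $(\nu, J)^{(a)}$ are untouched by $f_b$) and $\tep_b(f_a^\star v) = \tep_b(v)$ (the riggings of $(\nu, J)^{(b)}$ are untouched by $f_a^*$), in the real case via Propositions~\ref{prop:ep_phi} and~\ref{prop:ep_phi_star} and in the imaginary case by iterating $e_a^*$ and $e_b$ respectively.

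The substance lies in hypotheses~(\ref{item:star3}),~(\ref{item:star4}),~(\ref{item:star5}) together with the equivalence $\kappa_a(v) = 0 \iff \kappa_a^\star(v) = 0$ for $a \in I^\im$. For a fixed real index $a$, the combinatorics of $(\nu, J)^{(a)}$ under $e_a, f_a, e_a^*, f_a^*$ is identical to the symmetrizable Kac--Moody situation — the imaginary-index partitions enter only by shifting the vacancy numbers $p_i^{(a)}$, $i \ge 1$, by nonnegative amounts — and under the identification above $\kappa_a$ is the ``jump'' statistic of~\cite{TW16}, so~(\ref{item:star3}) and~(\ref{item:star4}), in particular the nonnegativity $\kappa_a(\nu, J) \ge 0$, follow as in~\cite{SalS15,SalS16II}. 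For an imaginary index $a$, Lemma~\ref{lemma:imaginary_partition} makes $\nu^{(a)} = (1^k)$ a single column with all labels $\ge -A_{aa}/2$, on which $e_a, f_a$ act at the row of smallest rigging and $e_a^*, f_a^*$ at the row of smallest corigging; a direct computation then shows that $\kappa_a(\nu, J)$ and $\kappa_a^\star(\nu, J)$ vanish simultaneously exactly when the column contributes trivially, in which case $f_a = f_a^*$ (hypothesis~(\ref{item:star3})), and that when $\kappa_a(\nu, J) > 0$ there is enough room that the row used by $f_a$ remains admissible for $f_a^*$ and vice versa, yielding $\tep_a^\star(f_a v) = \tep_a^\star(v)$ and $f_a f_a^* v = f_a^* f_a v$ (hypothesis~(\ref{item:star5})).

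I expect the main obstacle to be the nonnegativity $\kappa_a(\nu, J) \ge 0$ and the exact shape of hypothesis~(\ref{item:star3}) for real $a$: this inequality fails for an arbitrary labelling of $\nu$ by integers and genuinely uses membership in $\RC(\infty)$. The cleanest route is an induction on depth checking that each $f_b$ preserves the inequality — the gain of $-A_{ab} \ge 0$ in $\inner{h_a}{\wt}$ offsetting any drop in $\varepsilon_a$ — or, equivalently, a structural lemma bounding $x_{\mathrm{rig}}$ and $x_{\mathrm{cor}}$ of $(\nu, J)^{(a)}$ in terms of the vacancy numbers $p_i^{(a)}$ and their convexity along the parts of $\nu^{(a)}$. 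A secondary point is that, unlike for real indices, $\kappa_a$ is not $*$-self-dual when $a \in I^\im$, so the auxiliary equivalence $\kappa_a(v) = 0 \iff \kappa_a^\star(v) = 0$ must be established separately; this will be immediate from the single-column description of Lemma~\ref{lemma:imaginary_partition}.
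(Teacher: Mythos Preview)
Your proposal is correct and follows essentially the same route as the paper: verify the hypotheses of Theorem~\ref{thm:Binf_recog} for the bicrystal $(\RC(\infty), \RC(\infty)^*)$, dispatching the real-index conditions by citing the Kac--Moody case in~\cite{SalS16II} and handling the imaginary-index conditions by direct computation using Lemma~\ref{lemma:imaginary_partition}. One small correction: for $a \in I^{\im}$ the condition $\kappa_a(\nu,J) = 0$ does \emph{not} force the column $(\nu,J)^{(a)}$ to be empty---it is equivalent to $p_1^{(a)} = -kA_{aa}$ (with $\nu^{(a)} = (1^k)$), i.e.\ to the riggings being exactly $\{-(2m-1)A_{aa}/2 : 1 \le m \le k\}$, and it is from this explicit description that one checks $f_a(\nu,J) = f_a^*(\nu,J)$.
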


\begin{proof}
We show that the conditions of Theorem~\ref{thm:Binf_recog} hold. By construction, $f_a(\nu, J), f_a^*(\nu, J) \neq \zero$. The proof that $f_a^* f_b (\nu, J) = f_b f_a^*(\nu, J)$ for all $a \neq b$ follows from the fact that $f_a^*$ (resp.\ $f_b$) preserves riggings (resp.\ coriggings).\footnote{This proof is analogous to the proof for when $a,b \in I^{\re}$ given in~\cite[Thm.~4.13]{SalS16II}.} By~\cite[Thm.~4.13]{SalS16II}, it is sufficient to prove the remaining conditions hold for $a \in I^{\im}$.

Fix some $a \in I^{\im}$. Let $(\nu, J) \in \RC(\infty)$ and let $x$ be the rigging of $\nu^{(a)} = (1^k)$.
To see $f_a f_a^*(\nu, J) = f_a^*f_a(\nu, J)$, begin by noting that $f_a$ and $f_a^*$ always add a new row when they act by Lemma~\ref{lemma:imaginary_partition}.
Therefore, we have that $f_a^* f_a(\nu, J)$ adds rows with riggings
\[
x = -\frac{A_{aa}}2
\qquad\qquad
x^* = p_1^{(a)} - \frac{3A_{aa}}2
\]
by $f_a$ and $f_a^*$ respectively and changes all other riggings by $-A_{aa}$. Similarly, $f_a f_a^*(\nu, J)$ adds rows with riggings $x$ and $x^*$, but in the oppose order, and changes all other riggings by $-A_{aa}$. Hence, we have $f_a f_a^*(\nu, J) = f_a^*f_a(\nu, J)$.

Next, note that for any $(\widetilde{\nu}, \widetilde{J}) \in \RC(\infty)$, we have $\varphi_a(\widetilde{\nu}, \widetilde{J}) = 0$ if and only if $\widetilde{\nu}^{(b)} = \emptyset$ whenever $a =b$ or $A_{ab} \neq 0$. Thus, from the definition of $f_a$ and $f_a^*$, we have that the following are equivalent:
\begin{itemize}
\item $\kappa_a(\nu, J) = 0$;
\item $p_1^{(a)} = -k A_{aa}$, where $\nu^{(a)} = (1^k)$;
\item the riggings of $(\nu, J)^{(a)}$ are $\{-(2m-1)A_{aa}/2 \mid 1 \leq m \leq k \}$.
\end{itemize}
Therefore, assume $\kappa_a(\nu, J) = 0$. Then $f_a^*$ adds a row with a rigging of $-(2k+1)A_{aa}/2$ and $f_a$ adds a row with a rigging of $-A_{aa}/2$ and changes all of the other riggings to 
\[
\frac{(-2m-1) A_{aa}}{2} - A_{aa} = \frac{-\bigl(2(m+1)-1\bigr) A_{aa}}{2}. 
\]
Hence, we have $f_a(\nu, J) = f_a^*(\nu, J)$.
Now assume $\kappa_a(\nu, J) > 0$, which is equivalent to $p_1^{(a)} > -k A_{aa}$. By the previous analysis, $e_a^*$ removes the rows with the largest riggings, but it only does so if the corigging is $-A_{aa}/2$. However, the largest corigging in $f_a(\nu, J)$ is
\[
p_1^{(a)} - \frac{A_{aa}}2 > -(k+1) A_{aa} - \frac{A_{aa}}2 = -\left(k + \frac12\right) A_{aa},
\]
and hence if $e_a^*$ removed all other rows, we would have a final corigging of strictly greater than $-A_{aa} / 2$.
Moreover, all other coriggings remain unchanged, and hence, we have $\tep_a^*\bigl( f_a(\nu, J) \bigr) = \tep_a^*(\nu, J)$.

Furthermore, it is clear that $\kappa_a(\nu, J) = 0$ if and only if $\kappa_a^*(\nu, J) = 0$. Therefore, the claim follows by Theorem~\ref{thm:Binf_recog}.
\end{proof}

Therefore, by Definition~\ref{def:RC_crystal_ops} and Definition~\ref{def:RC_star_crystal_ops}, we have the following.

\begin{cor}
The $*$-involution on $\RC(\infty)$ is given by replacing every rigging $x$ of a row of length $i$ in $(\nu, J)^{(a)}$ by the corresponding corigging $p_i^{(a)} - x$ for all $(a, i) \in \HH$.
\end{cor}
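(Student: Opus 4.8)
The plan is to exhibit the rigging-corigging swap as a well-defined involution of $\RC(\infty)$ that conjugates the ordinary crystal operators into the starred ones in the same way $*$ does, and then to conclude the two maps agree by propagating this identity from the empty configuration. Let $\theta$ denote the operation on sequences of rigged partitions that leaves the underlying partition $\nu$ unchanged and replaces every rigging $x$ of a row of length $i$ in $(\nu,J)^{(a)}$ by $p_i^{(a)}(\nu) - x$ for all $(a,i) \in \HH$. Since $\theta$ does not alter $\nu$, it does not alter any vacancy number, so it is well defined on rigged configurations, it is an involution (because $p_i^{(a)} - (p_i^{(a)} - x) = x$), and it fixes $(\nu_\emptyset, J_\emptyset)$ since the empty configuration has no rows to relabel.

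First I would check that $\theta$ intertwines the two families of operators: for every $a \in I$ one has $\theta \circ e_a = e_a^* \circ \theta$ and $\theta \circ f_a = f_a^* \circ \theta$, with both sides simultaneously equal to $\zero$. This is exactly the duality convention of Remark~\ref{remark:duality}: reading Definition~\ref{def:RC_crystal_ops} against Definition~\ref{def:RC_star_crystal_ops} term by term --- ``smallest rigging'' becomes ``smallest corigging'', ``new rigging $x + A_{aa}/2$'' becomes ``new corigging $x + A_{aa}/2$'', ``change all other riggings so the coriggings stay fixed'' becomes ``keep all other riggings fixed'', and the case distinctions ($x = 0$ versus $x \neq 0$ for $a \in I^{\re}$, and $\nu^{(a)} = \emptyset$ versus $x = -A_{aa}/2$ for $a \in I^{\im}$) match up under $x \mapsto p_i^{(a)} - x$ --- shows that the starred operators are precisely the $\theta$-conjugates of the unstarred ones.

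Next I would transport this along the generating relations. Given $(\nu, J) \in \RC(\infty)$, write $(\nu,J) = f_{a_1} \cdots f_{a_r}(\nu_\emptyset, J_\emptyset)$; then the intertwining relation together with $\theta(\nu_\emptyset, J_\emptyset) = (\nu_\emptyset, J_\emptyset)$ gives $\theta(\nu,J) = f_{a_1}^* \cdots f_{a_r}^*(\nu_\emptyset, J_\emptyset)$, which lies in $\RC(\infty)^* = \RC(\infty)$ by Theorem~\ref{thm:RCisBinf}; in particular $\theta$ is a genuine involution of $\RC(\infty)$. On the other hand, the $\ast$-involution also fixes $(\nu_\emptyset, J_\emptyset)$ --- under $\RC(\infty) \cong B(\infty)$ it corresponds to $\one$, which is the unique element of weight $0$ and hence $*$-stable --- and from $f_a^* = * \circ f_a \circ *$ of Theorem~\ref{thm:RCisBinf} we get $* \circ f_a = f_a^* \circ *$, so $*(\nu,J) = f_{a_1}^* \cdots f_{a_r}^*(\nu_\emptyset, J_\emptyset)$ as well. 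Comparing the two formulas yields $\theta(\nu,J) = *(\nu,J)$ for every $(\nu,J) \in \RC(\infty)$, which is the assertion of the corollary.

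I expect the main (and essentially the only) obstacle to be the line-by-line verification in the second step. It is entirely routine once one observes that $x \mapsto p_i^{(a)} - x$ reverses the order of the labels on rows of any fixed length --- so that ``minimal length with smallest label'' and ``maximal length with smallest label'' correspond correctly to the choices of row made by $e_a$ versus $f_a$ --- and that for $a \in I^{\im}$ all rows of $\nu^{(a)}$ have length $1$ by Lemma~\ref{lemma:imaginary_partition}, so the swap there is a single affine map. Everything after that is a one-line induction on depth.
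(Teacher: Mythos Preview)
Your proposal is correct and is exactly the argument the paper intends: the paper states the corollary immediately after Theorem~\ref{thm:RCisBinf} with the one-line justification ``by Definition~\ref{def:RC_crystal_ops} and Definition~\ref{def:RC_star_crystal_ops}'', which is precisely the duality of Remark~\ref{remark:duality} that you spell out, followed by the induction on depth from the highest weight element. You have simply written out in detail what the paper leaves implicit.
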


\section{Characterization in the purely imaginary case}\label{sec:purely_imaginary}

In this section, we give an explicit characterization of the rigged configurations in the purely imaginary case (\textit{i.e.}, when $I^{\im} = I$).

\begin{ex}
\label{ex:purely_imaginary}
Let $I = \{1,2\}$ and
\[
A = \begin{pmatrix} -2\alpha & -\beta \\ -\gamma & -2\delta \end{pmatrix},
\]
such that $\alpha,\beta,\gamma,\delta \in \ZZ_{\ge0}$ (so $I = I^\im$).  The top part of the crystal graph $\RC(\infty)$ is pictured in Figure \ref{fig:pureim}.  Set 
\[
(\nu,J) = f_1^3f_2(\nu_\emptyset,J_\emptyset) = \ \
\begin{tikzpicture}[scale=.4,baseline=-30]
    \rpp{1,1,1}{5\alpha,3\alpha,\alpha}{6\alpha+\beta,6\alpha+\beta,6\alpha+\beta}
   \begin{scope}[xshift=7cm]
    \rpp{1}{\delta+3\gamma}{2\delta+3\gamma}
   \end{scope}
   \end{tikzpicture}\ .
\]
Then
\[
f_2(\nu,J) = \ \
\begin{tikzpicture}[scale=.4,baseline=-30]
    \rpp{1,1,1}{5\alpha+\beta,3\alpha+\beta,\alpha+\beta}{6\alpha+2\beta,6\alpha+2\beta,6\alpha+2\beta}
   \begin{scope}[xshift=8cm]
    \rpp{1,1}{3\delta+3\gamma,\delta}{4\delta+3\gamma,4\delta+3\gamma}
   \end{scope}
   \end{tikzpicture}
\]
and
\[
f_2^*(\nu,J) = \ \
\begin{tikzpicture}[scale=.4,baseline=-30]
    \rpp{1,1,1}{5\alpha,3\alpha,\alpha}{6\alpha+2\beta,6\alpha+2\beta,6\alpha+2\beta}
   \begin{scope}[xshift=7cm]
    \rpp{1,1}{3\delta+3\gamma,\delta+3\gamma}{4\delta+3\gamma,4\delta+3\gamma}
   \end{scope}
   \end{tikzpicture}\ .
\]
More generally, if we consider the generic element
\[
f_1^{j_1} f_2^{k_1} \cdots f_1^{j_z} f_2^{k_z} (\nu_{\emptyset}, J_{\emptyset}) \in \RC(\infty),
\]
where $j_q, k_q > 0$ except possibly $j_1 = 0$,
then we have $\nu^{(1)} = (1^{j_1 + \cdots + j_z})$ and $\nu^{(2)} = (1^{k_1 + \cdots + k_z})$ with
\begin{align*}
J_1^{(1)} & = \{ (2j_1 + \cdots + 2j_z - 1)\alpha + (k_1 + \cdots + k_{z-1})\beta,
\\ & \hspace{20pt} \dotsc, (2j_1 + \cdots + 2j_{z-1} + 1)\alpha + (k_1 + \cdots + k_{z-1})\beta,
\\ & \hspace{20pt} \dotsc,
\\ & \hspace{20pt} (2j_1 + 2j_2 - 1)\alpha + k_1\beta, \dotsc, (2j_1 + 1)\alpha + k_1\beta,
\\ & \hspace{20pt} (2j_1 - 1)\alpha, \dotsc, \alpha\},
\allowdisplaybreaks \\
J_1^{(2)} & = \{ (2k_1 + \cdots + 2k_z - 1)\delta + (j_1 + \cdots + j_z)\gamma,
\\ & \hspace{20pt} \dotsc, (2k_1 + \cdots + 2k_{z-1} + 1)\delta + (j_1 + \cdots + j_z) \gamma,
\\ & \hspace{20pt} \dotsc,
\\ & \hspace{20pt} (2k_1 + 2k_2 - 1)\delta + (j_1 + j_2)\gamma, \dotsc, (2k_1 + 1)\delta + (j_1 + j_2) \gamma,
\\ & \hspace{20pt} (2k_1 - 1)\delta + j_1\gamma,\dotsc, \delta + j_1 \gamma\}.
\end{align*}
Note that since $\beta, \gamma > 0$, given such a $J_1^{(1)}$ and $J_1^{(2)}$, it is easy to see that we can uniquely solve for $j_1, \dotsc, j_z$ and $k_1, \dotsc, k_z$.
\end{ex}

\begin{figure}[t]
\include{pure_im}
\caption{Top of the crystal graph for a purely imaginary Borcherds--Cartan matrix in terms of rigged configurations using the Borcherds--Cartan matrix from Example~\ref{ex:purely_imaginary}.  Here, the blue arrows correspond to $f_1$ and the red arrows correspond to $f_2$.}
\label{fig:pureim}
\end{figure}

Let $A = (A_{ab})$ be a purely imaginary Borcherds--Cartan matrix.  Let $(\nu,J)$ be a rigged configuration such that $\nu = \bigl((1^{k_a}) \mid a \in I\bigr)$.  Given such a rigged configuration, write $J_1^{(a)} = \{x^{(a)}_1, \dotsc, x^{(a)}_{k_a}\}$ for each $a\in I$. We assume $x^{(a)}_1 \geq \cdots \geq x^{(a)}_{k_a}$.
We say $\{x^{(a)}_j \geq x^{(a)}_{j+1} \geq \cdots \geq x^{(a)}_{j'}\}$ is an \defn{$a$-string} if
\begin{itemize}
\item $x^{(a)}_q - x^{(a)}_{q+1} = -A_{aa}$ for all $j \leq q < j'$,
\item $x^{(a)}_{j-1} - x^{(a)}_j \neq -A_{aa}$, and
\item $x^{(a)}_{j'} - x^{(a)}_{j'+1} \neq -A_{aa}$.
\end{itemize}
Note that this agrees with a $a$-string of crystal operators if $x^{(a)}_{j'} = -\frac{1}{2} A_{aa}$.
This can be seen in the generic element at the end of Example~\ref{ex:purely_imaginary}.

\begin{ex}
Let $A = (A_{ab})_{a,b\in I}$ be a Borcherds--Cartan matrix with $I = I^\im = \{1,2,3\}$.  Then 
\[
f_2^3 f_3^2 f_1^2 f_3(\nu_\emptyset,J_\emptyset) = \ 
\begin{tikzpicture}[scale=.4,baseline=-22]
    \rpp{1,1}{-\frac{3}{2}A_{11}-3A_{12}-3A_{13}, -\frac{1}{2}A_{11}-3A_{12}-3A_{13}}{}
   \begin{scope}[xshift=11cm]
    \rpp{1,1,1}{-\frac{5}{2}A_{22},-\frac{3}{2}A_{22},-\frac{1}{2}A_{22}}{}
   \end{scope}
   \begin{scope}[xshift=16cm]
    \rpp{1,1,1}{-\frac{5}{2}A_{33}-A_{31}-3A_{32},-\frac{3}{2}A_{33}-3A_{32},-\frac{1}{2}A_{33}-3A_{32}}{}
   \end{scope}
   \end{tikzpicture}.
\]
Thus, the resulting rigged configuration has a $1$-string of size $2$, a $2$-string of size $3$.
For the $3$-strings, if $A_{31} = 0$, then there is a single $3$-string of size $3$, otherwise there are two $3$-strings of sizes $2$ and $1$.
\end{ex}

\begin{dfn}
We say $(\nu, J)$ is \defn{balanced} if $\nu^{(a)} = (1^{k_a})$ for all $a \in I$ and there exists a total ordering $(\Sigma_1, \dotsc, \Sigma_m)$ on $\{ S^{(a)}_j \mid a \in I,\ 1 \leq j \leq q_a \}$, where $\{S^{(a)}_1, \dotsc, S^{(a)}_{q_a}\}$ is the decomposition of $\nu^{(a)}$ into $a$-strings, such that
\begin{equation}
\label{eq:balanced_condition}
\overline{\Sigma}_j = -\frac{1}{2} A_{aa} - \sum_{k=1}^{j-1} A_{aa'} \left\lvert \Sigma_k \right\rvert,
\end{equation}
where $\Sigma_j = S^{(a)}_q$ and $\Sigma_k = S^{(a')}_{q'}$ with $\overline{\Sigma}_j$ denoting the smallest rigging of the $a$-string $\Sigma_j$.
Note that we vacuously have $(\nu_{\emptyset}, J_{\emptyset})$ being balanced.
\end{dfn}

\begin{prop}
Let $A$ be a purely imaginary Borcherds--Cartan matrix. The set of balanced rigged configurations equals $\RC(\infty)$.
\end{prop}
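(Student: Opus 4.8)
The plan is to translate a balanced ordering of the strings into a word of lowering operators and to prove the two inclusions separately.

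For the inclusion $\RC(\infty) \subseteq \{\text{balanced configurations}\}$, I would use that by Theorem~\ref{thm:RCisBinf} every $(\nu, J) \in \RC(\infty)$ has the form $f_{a_1} \cdots f_{a_r}(\nu_\emptyset, J_\emptyset)$; since $(\nu_\emptyset, J_\emptyset)$ is vacuously balanced, it is enough to show by induction on $r$ that $f_a$ carries a balanced rigged configuration to a balanced one. By Lemma~\ref{lemma:imaginary_partition} and the rigging-change rules following Definition~\ref{def:RC_crystal_ops}, $f_a$ attaches a new length-$1$ row to $\nu^{(a)}$ with rigging $-A_{aa}/2$ and, since that box is attached to a row of length $0$, translates every rigging of each $\nu^{(b)}$ by a single constant depending only on $a$ and $b$. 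Consequently, for $c \neq a$ each old $c$-string is translated rigidly and remains a $c$-string, and for $c = a$ the old $a$-strings are translated rigidly while the new row is inserted at the bottom; by Lemma~\ref{lemma:imaginary_partition} this new row forms a fresh $a$-string of size $1$, unless the lowest old $a$-string already had smallest rigging $-A_{aa}/2$ — equivalently, by~\eqref{eq:balanced_condition}, unless every string preceding that lowest $a$-string in the ordering has a color $c$ with $A_{ac} = 0$ — in which case the new row merges onto it. In either case one puts the new (or enlarged) $a$-string at the front of the ordering and verifies~\eqref{eq:balanced_condition} for the resulting ordering; this amounts to a direct computation comparing the translation just applied by $f_a$ to each old string with the single extra $k=1$ summand contributed by the new front string, where in the merge case one additionally invokes the defining property $A_{ac} = 0 \iff A_{ca} = 0$ of a Borcherds--Cartan matrix to conclude that $f_a$ fixes every string preceding the merged one.

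Conversely, for $\{\text{balanced configurations}\} \subseteq \RC(\infty)$, let $(\nu, J)$ be balanced with ordering $(\Sigma_1, \dots, \Sigma_m)$ and set $a_k := \mathrm{col}(\Sigma_k)$, $s_k := \lvert \Sigma_k \rvert$. I claim
\[
(\nu, J) = f_{a_1}^{s_1} f_{a_2}^{s_2} \cdots f_{a_m}^{s_m}(\nu_\emptyset, J_\emptyset),
\]
which puts $(\nu, J)$ in $\RC(\infty)$. This is verified by tracking each box individually: the block $f_{a_k}^{s_k}$ creates $s_k$ new rows of $\nu^{(a_k)}$ whose riggings, once the block has finished acting, are $-A_{a_ka_k}/2, -3A_{a_ka_k}/2, \dotsc, -(2s_k-1)A_{a_ka_k}/2$; the subsequently applied blocks $f_{a_{k-1}}^{s_{k-1}}, \dotsc, f_{a_1}^{s_1}$ translate these riggings by exactly $-\sum_{j<k} A_{a_k a_j} s_j$, whereas the blocks $f_{a_{k+1}}^{s_{k+1}}, \dotsc$ applied before them do not touch these rows. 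Comparing with~\eqref{eq:balanced_condition}, the riggings contributed by the $k$-th block are precisely the riggings of the $a_k$-string $\Sigma_k$; summing over $k$ and using that $\{\Sigma_k : a_k = a\}$ is the set of $a$-strings of $(\nu, J)$, the right-hand side has partitions $\nu$ and riggings $J$. (One may also note that consecutive strings in a balanced ordering must have distinct colors, as~\eqref{eq:balanced_condition} would otherwise force them to combine into a single $a$-string; this is a useful consistency check, though not strictly needed above.)

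I expect the inductive step of the first inclusion to be the main obstacle: it requires keeping careful track of how $f_a$ redistributes all of the riggings at once, confirming that ``prepend the new $a$-string'' really is compatible with~\eqref{eq:balanced_condition} after the translation, and correctly isolating the merge phenomenon and recognizing that it is governed entirely by the vanishing pattern of $A$. The second inclusion and the distinct-color remark are then routine once this bookkeeping is in place, and the isotropic case $A_{aa} = 0$ should be checked separately but presents no essential difficulty.
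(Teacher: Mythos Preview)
Your argument is correct and rests on the same core observation as the paper's: applying $f_a$ (resp.\ $e_a$) to a balanced configuration prepends a new singleton $a$-string to the front of the ordering (resp.\ deletes the bottom rigging from $\Sigma_1$), and~\eqref{eq:balanced_condition} is preserved under this move. The paper organizes things differently and much more tersely: it notes from~\eqref{eq:balanced_condition} with $j=1$ that $(\nu_\emptyset,J_\emptyset)$ is the unique highest-weight balanced configuration, then asserts that $e_a$ preserves balancedness ``straightforward[ly]'' by removing the smallest rigging from $\Sigma_1$, and concludes by connectedness. Your version spells out the dual $f_a$-closure in full---including the merge phenomenon and the appeal to the Borcherds--Cartan axiom $A_{ac}=0 \Leftrightarrow A_{ca}=0$, neither of which the paper mentions---and for the reverse inclusion replaces the abstract $e_a$-walk by the explicit word $(\nu,J)=f_{a_1}^{s_1}\cdots f_{a_m}^{s_m}(\nu_\emptyset,J_\emptyset)$ read directly off the balanced ordering. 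That explicit formula is a small but genuine gain over the paper: it gives a constructive inverse to the string decomposition rather than just an existence statement.
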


\begin{proof}
We need to show that the set of balanced rigged configurations is closed under the crystal operators and connected to $(\nu_{\emptyset}, J_{\emptyset})$.
From Lemma~\ref{lemma:imaginary_partition}, we see that we must have $\nu^{(a)} = (1^{k_a})$.
From Equation~\eqref{eq:balanced_condition} with $j = 1$ and the crystal operators, the only highest weight balanced rigged configuration is $(\nu_{\emptyset}, J_{\emptyset})$.
Therefore, it is sufficient to show that for a balanced rigged configuration $(\nu, J)$, the rigged configuration $e_a(\nu, J) = (\widetilde{\nu}, \widetilde{J}) \in \RC(\infty)$ is also balanced.
However, it is straightforward to see this from the definition of the crystal operators, which removes the smallest rigging from $\Sigma_1$, and Equation~\eqref{eq:balanced_condition}.
\end{proof}

We finish this section with an aside about the crystal operators in the purely imaginary case. We remark that the following fact is implicitly why $\RC(\infty)$ is described by balanced rigged configurations.

\begin{prop}
Let $a, a' \in I^{\im}$.
If $A_{aa'} = 0$, then the crystal operators $f_a$ and $f_{a'}$ commute. Otherwise, $f_a$ and $f_{a'}$ are free.
\end{prop}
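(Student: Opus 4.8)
The plan is to analyze the interaction of $f_a$ and $f_{a'}$ directly on the level of rigged configurations, using the structural constraints from Lemma~\ref{lemma:imaginary_partition}. Recall that for $a \in I^{\im}$ every partition $\nu^{(a)}$ is of the form $(1^k)$, so $f_a$ always adds a new row of length $1$ with rigging $-A_{aa}/2$ (there are no longer rows to attach to). The key bookkeeping point is how $f_a$ changes the riggings in $(\nu, J)^{(a')}$: according to the explicit formula just after Definition~\ref{def:RC_crystal_ops}, a rigging $x$ of a row of length $i > \ell = 1$ in $(\nu,J)^{(a')}$ is shifted to $x - A_{aa'}$; but since the only rows present in $\nu^{(a')}$ also have length $1 = \ell$, \emph{no} rigging of $(\nu,J)^{(a')}$ is changed when $A_{aa'} = 0$, and more importantly even when $A_{aa'} \neq 0$ the shift on rows of length $1$ is zero because $i = \ell$, not $i > \ell$. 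The genuine effect of $f_a$ on the $a'$-partition is therefore limited: it changes nothing.

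Wait — that would seem to make them always commute, so the subtlety must lie in the $\ell = \infty$ rows, i.e.\ the vacancy-number bookkeeping. So first I would carefully recompute: when $f_a$ adds a box to the (length-$0$, hence new length-$1$) row in $\nu^{(a)}$, the row it modifies has length $\ell = 1$, and all rows of $\nu^{(a')}$ also have length $1$, so indeed $i = \ell$ and their riggings are untouched; but the key is that adding a box to $\nu^{(a)}$ changes the vacancy numbers $p_1^{(a')}$ by $-A_{a'a}\cdot 1$, and since coriggings are to be preserved by $e_{a'}$ and $f_{a'}$, the new \emph{riggings} of $(\nu,J)^{(a')}$ implicitly shift. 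The correct statement is: $f_a$ shifts every rigging of $(\nu, J)^{(a')}$ by $-A_{a'a}$ (via the ``keep coriggings fixed'' clause applied through the change of $p_1^{(a')}$), which matches the displayed formula read with $i > \ell$ understood as $i \ge \ell$ for length-$1$ rows in the degenerate situation — I would state this cleanly as a small lemma.

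With that established, the proof splits into two short computations. When $A_{aa'} = 0$: then $A_{a'a} = 0$ too (by axiom (4) of a Borcherds--Cartan matrix), so $f_{a'}$ does not shift any rigging of $(\nu,J)^{(a)}$ and $f_a$ does not shift any rigging of $(\nu,J)^{(a')}$; each operator simply appends a new length-$1$ row with a fixed rigging ($-A_{aa}/2$ resp.\ $-A_{a'a'}/2$) to its own partition, and these operations act on disjoint data and therefore commute — I would verify $f_a f_{a'}(\nu,J) = f_{a'} f_a(\nu,J)$ by comparing the two resulting rigged configurations row by row. When $A_{aa'} \neq 0$ (hence $A_{a'a} \neq 0$): I would show $f_a$ and $f_{a'}$ are free, meaning no nontrivial relation holds among them, by exhibiting that the riggings of $(\nu, J)^{(a)}$ and $(\nu, J)^{(a')}$ after applying an arbitrary word $f_{a_1}\cdots f_{a_r}$ (each $a_i \in \{a, a'\}$) determine the word uniquely. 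This is precisely the computation already carried out in the generic element at the end of Example~\ref{ex:purely_imaginary}: reading off $J_1^{(a)}$ and $J_1^{(a')}$, the coefficients of $\beta = -A_{aa'}$ and $\gamma = -A_{a'a}$ (both nonzero) encode the partial sums $j_1 + \cdots + j_q$ and $k_1 + \cdots + k_q$, from which the exponents $j_q, k_q$ — and hence the word — are recovered uniquely. So distinct words give distinct rigged configurations, which is exactly freeness.

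**The main obstacle** I expect is getting the sign and index conventions exactly right in the ``shift by $-A_{a'a}$ versus $-A_{aa'}$'' step — the explicit rigging-change formula after Definition~\ref{def:RC_crystal_ops} is stated for rows \emph{strictly} longer than the modified row, and one must check that in the purely imaginary situation (where all rows have length $1$) the relevant shift still occurs, coming instead through the ``change all other riggings so the coriggings remain fixed'' clause together with the change in $p_1^{(a')}$. Once that single bookkeeping lemma is pinned down, both cases are immediate; the freeness case adds nothing beyond invoking the unique-solvability observation already recorded in Example~\ref{ex:purely_imaginary}.
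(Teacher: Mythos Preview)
Your overall strategy is sound and would yield a proof, but there is one notational confusion and one genuine difference from the paper's argument worth flagging.

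\medskip

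\textbf{The bookkeeping confusion.} You repeatedly take $\ell = 1$ for the row on which $f_a$ acts when $a \in I^{\im}$. This is off by one: by Lemma~\ref{lemma:imaginary_partition} the minimal rigging in $(\nu,J)^{(a)}$ is always $0$, realized on the (infinitely many) length-$0$ rows, and every existing length-$1$ row has rigging $\geq -A_{aa}/2 > 0$. Hence $f_a$ always selects a length-$0$ row, so $\ell = 0$. With $\ell = 0$, every row of length $i = 1$ in any $(\nu,J)^{(b)}$ satisfies $i > \ell$, and the displayed formula after Definition~\ref{def:RC_crystal_ops} gives the shift $x \mapsto x - A_{ab}$ directly. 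There is no ``degenerate'' reading needed, and your proposed small lemma is unnecessary once $\ell = 0$ is used. Your eventual conclusion (that the riggings of $(\nu,J)^{(a')}$ shift by $-A_{a'a}$) is correct; only the route to it was tangled.

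\medskip

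\textbf{Comparison with the paper's argument.} For the commuting case $A_{aa'} = 0$ your plan coincides with the paper's: both observe that the two operators then act on disjoint data. For the free case $A_{aa'} \neq 0$ the paper takes a shorter route than yours. Rather than invoking the explicit formulas of Example~\ref{ex:purely_imaginary} to reconstruct the word from the riggings, the paper simply notes (after reducing to rank $2$, with $\alpha = -A_{11}/2$, $\beta = -A_{12}$) that if $e_1(\nu,J) \neq \zero$ then $\min J_1^{(1)} = \alpha$, whereas after $f_2^k$ this minimum becomes $\alpha + k\beta > \alpha$, so $e_1 f_2^k(\nu,J) = \zero$. Thus applying $f_2$ destroys the ability to undo an $f_1$, which forces the last letter of any word in $f_1,f_2$ to be uniquely determined by whether $e_1$ or $e_2$ is nonzero on the result; freeness follows by induction on word length. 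Your approach via unique solvability of the rigging data is equally valid and arguably more explicit, but it leans on the full computation in Example~\ref{ex:purely_imaginary}, whereas the paper's one-line observation about minimal riggings avoids that.
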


\begin{proof}
It is sufficient to consider the rank-$2$ case with $I = I^{\im} = \{1,2\}$ with
\[
A = \begin{pmatrix} -2\alpha & -\beta \\ -\gamma & -2\delta \end{pmatrix}.
\]
If $\beta = \gamma = 0$, then it is clear that the crystal operators commute. If $\beta, \gamma > 0$, then consider a rigged configuration $(\nu, J) \in \RC(\infty)$ such that without loss of generality $e_1(\nu, J) \neq \zero$.
Hence, we have $\min J_1^{(1)} = \alpha$. If $f_2^k(\nu, J) = (\widetilde{\nu}, \widetilde{J})$ for some $k \in \ZZ_{>0}$, then we have $\min \widetilde{J}_1^{(1)} = \alpha + k\beta > \alpha$. Hence $e_1 (\widetilde{\nu}, \widetilde{J}) = \zero$, and the claim follows.
\end{proof}

As a consequence, in the purely imaginary case, the elements of $B(\infty)$ are in bijection with a right-angled Artin monoid:
$
\langle f_a \ |\ f_a f_{a'} = f_{a'} f_a \text{ if } A_{aa'} = 0 \rangle.
$
In particular, the Cayley graph of this monoid is isomorphic to the crystal graph.

\section{Highest weight crystals}
\label{sec:highest_weight}

We can describe highest weight crystals $B(\lambda)$ by utilizing Theorem~\ref{thm:cutout}. Fix some $\lambda \in P^+$. We describe the crystal $B(\lambda)$ using rigged configurations by defining new crystal operators $f'_a(\nu, J)$ as $f_a(\nu, J)$ unless $p_i^{(a)} + \inner{h_a}{\lambda} < x$ for some $(a,i) \in \HH$ and $x \in J_i^{(a)}$ or $\varphi_a(\nu, J) = 0$ for $a \in I^{\im}$, in which case $f'_a(\nu, J) = 0$. Let $\RC(\lambda)$ denote the closure of $(\nu_{\emptyset}, J_{\emptyset})$ under $f'_a$.

\begin{thm}
\label{thm:highest_weight_RC}
Let $\lambda \in P^+$. Then $\RC(\lambda) \iso B(\lambda)$.
\end{thm}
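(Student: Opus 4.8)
The plan is to use the cutout description of $B(\lambda)$ from Theorem~\ref{thm:cutout}: $B(\lambda)$ is the connected component of $B(\infty) \otimes T_\lambda \otimes C$ containing $\one \otimes t_\lambda \otimes c$. Since $\RC(\infty) \cong B(\infty)$ by Theorem~\ref{thm:RCisBinf}, it suffices to identify $\RC(\lambda)$ with this connected component under the composition $\RC(\infty) \to B(\infty)$. Concretely, I would define a map $\Phi \colon \RC(\lambda) \to B(\infty) \otimes T_\lambda \otimes C$ by $(\nu, J) \mapsto (\nu, J) \otimes t_\lambda \otimes c$ and show it is a crystal isomorphism onto the relevant component. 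The weight, $\varepsilon_a$, and $\varphi_a$ statistics for $B(\infty) \otimes T_\lambda \otimes C$ are computed from the tensor product rule, and since $\varepsilon_a(t_\lambda) = \varphi_a(t_\lambda) = -\infty$ and $\varepsilon_a(c) = \varphi_a(c) = 0$ and $\wt(t_\lambda) = \lambda$, $\wt(c) = 0$, one checks $\varepsilon_a\bigl((\nu,J) \otimes t_\lambda \otimes c\bigr) = \varepsilon_a(\nu,J)$ and $\wt\bigl((\nu,J) \otimes t_\lambda \otimes c\bigr) = \wt(\nu,J) + \lambda$.

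The key step is to verify that the modified operator $f'_a$ on $\RC(\lambda)$ matches the tensor product action of $f_a$ on $(\nu,J) \otimes t_\lambda \otimes c$. Applying the tensor product rule twice: $f_a\bigl((\nu,J) \otimes t_\lambda \otimes c\bigr) = \zero$ exactly when $f_a$ "falls off the right end", which (using $\varphi_a(t_\lambda) = -\infty$ and the $a \in I^{\im}$ versus $a \in I^{\re}$ cases of the rule) happens precisely when $\varphi_a\bigl((\nu,J) \otimes t_\lambda\bigr) \le \varepsilon_a(c) = 0$; otherwise it acts as $f_a(\nu,J) \otimes t_\lambda \otimes c$. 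Since $\varphi_a\bigl((\nu,J) \otimes t_\lambda\bigr) = \varphi_a(\nu,J) + \langle h_a, \lambda\rangle = \varepsilon_a(\nu,J) + \langle h_a, \wt(\nu,J) + \lambda\rangle$, I must show this quantity being $\le 0$ (for $a \in I^{\im}$) or the analogous real-case condition is equivalent to the defining obstruction for $f'_a$, namely that $p_i^{(a)} + \langle h_a, \lambda\rangle < x$ for some row, or $\varphi_a(\nu,J) = 0$ in the imaginary case. For $a \in I^{\re}$, using Proposition~\ref{prop:ep_phi} ($\varepsilon_a(\nu,J) = -x$ where $x$ is the smallest rigging), the condition $\varphi_a\bigl((\nu,J) \otimes t_\lambda\bigr) < \varepsilon_a(c)$ translates into a statement about the smallest rigging after applying $f_a$ exceeding its vacancy-number-plus-$\langle h_a,\lambda\rangle$ bound, which is exactly the $\RC(\lambda)$ cutoff; for $a \in I^{\im}$ one uses Lemma~\ref{lemma:imaginary_partition} and the explicit form of $f_a$ on the single-column partition, where $\varphi_a(\nu,J) = p_\infty^{(a)} - x$ and the obstruction $\varphi_a(\nu,J) = 0$ directly matches.

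After the operator match is established, $\Phi$ is a strict crystal morphism by construction; it is injective because $\RC(\infty) \to B(\infty)$ is injective and $\otimes t_\lambda \otimes c$ is injective; and its image is exactly the connected component containing $\one \otimes t_\lambda \otimes c$ because $\RC(\lambda)$ is by definition the closure of $(\nu_\emptyset, J_\emptyset)$ under the $f'_a$ and we must also check that $e_a$ never exits this set — which follows by the same tensor-product computation applied to $e_a$, together with the fact that $e_a$ on $\RC(\infty)$ preserves the cutoff condition. Thus $\RC(\lambda) \cong B(\lambda)$ by Theorem~\ref{thm:cutout}. The main obstacle I anticipate is the bookkeeping in the $a \in I^{\im}$ case: one must carefully track how $f_a$ changes all the riggings (by $-A_{ab}$ for the other components and by $-A_{aa}/2$ for the new row) and confirm that the single failure mode $\varphi_a(\nu,J) = 0$ — rather than a row-by-row inequality — is genuinely what the tensor product rule produces, since the imaginary tensor product rule has the extra intermediate regime $\varepsilon_a(v_2) < \varphi_a(v_1) \le \varepsilon_a(v_2) - A_{aa}$ that must be shown not to arise here because $\varepsilon_a(c) = 0$ forces $\varphi_a\bigl((\nu,J)\otimes t_\lambda\bigr) \le -A_{aa}$ to coincide with $\varphi_a\bigl((\nu,J)\otimes t_\lambda\bigr) \le 0$ only when it is actually $\le 0$, i.e. the middle regime for $f_a$ collapses.
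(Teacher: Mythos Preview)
Your approach is correct and is precisely the paper's: the paper simply defers to \cite[Thm.~4]{SalS16II}, whose argument embeds $\RC(\lambda)$ into $\RC(\infty)\otimes T_\lambda\otimes C$ and matches the modified operator $f'_a$ with the tensor product $f_a$ via Theorem~\ref{thm:cutout}, exactly as you outline. One small clarification: the ``intermediate regime'' $\varepsilon_a(v_2) < \varphi_a(v_1) \le \varepsilon_a(v_2) - A_{aa}$ you worry about at the end belongs to the tensor rule for $e_a$, not $f_a$; since every element of $B(\lambda)$ is already of the form $f_{a_1}\cdots f_{a_r} u_\lambda$ and Theorem~\ref{thm:cutout} identifies the connected component with $B(\lambda)$, the $f_a$-match alone suffices and no separate $e_a$ bookkeeping is needed.
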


The proof of Theorem~\ref{thm:highest_weight_RC} is the same as~\cite[Thm.~4]{SalS16II}.

Next, we can characterize the image of $B(\lambda)$ inside $B(\infty)$ using the $\ast$-involution in analogy to~\cite[Prop.~8.2]{K95}. Recall the crystal $T_{\lambda}$ from Example~\ref{ex:T_crystal}.

\begin{cor}
\label{cor:classification_hw_star}
Let $\lambda \in P^+$. Then we have
\[
\RC(\lambda) \iso \left\{
(\nu,J) \otimes t_{\lambda} \in \RC(\infty) \otimes T_{\lambda} \mid
\begin{gathered}
  \varepsilon_a^*(\nu, J) \leq \inner{h_a}{\lambda} \text{ for all } a \in I^{\re}, \\
  e_a^*(\nu, J) = \zero \text{ if } \inner{h_a}{\lambda} = 0 \text{ for all } a \in I^{\im}
\end{gathered}
\right\}.
\]
\end{cor}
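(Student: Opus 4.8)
The plan is to combine Theorem~\ref{thm:highest_weight_RC} with the characterization of $B(\lambda)$ as a subcrystal of $B(\infty) \otimes T_\lambda$ coming from the embedding $\iota_\lambda$ of Example~\ref{ex:T_crystal}, together with the generalized Kac--Moody analogue of Kashiwara's description~\cite[Prop.~8.2]{K95}, which states that the image of $B(\lambda)$ inside $B(\infty) \otimes T_\lambda$ consists of those $(\nu,J) \otimes t_\lambda$ with $\tep_a^*(\nu,J) \le \langle h_a, \lambda\rangle$ for $a \in I^{\re}$ (together with the appropriate condition for imaginary $a$). Since Theorem~\ref{thm:RCisBinf} identifies $\RC(\infty) \cong B(\infty)$ with $e_a^*, f_a^*$ agreeing with the abstract $*$-operators, it suffices to transport this characterization to the rigged configuration language, i.e.\ to show that the set on the right-hand side of the corollary is exactly the connected component of $(\nu_\emptyset, J_\emptyset) \otimes t_\lambda$ and that $f'_a$ on $\RC(\lambda)$ matches $f_a$ on that component under the tensor rule.

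First I would recall that, by Theorem~\ref{thm:cutout} and Example~\ref{ex:T_crystal}, $B(\lambda)$ is the connected component of $B(\infty) \otimes T_\lambda \otimes C$ through $\one \otimes t_\lambda \otimes c$, and that the $C$-factor may be suppressed since it is a singleton crystal that only records when the highest weight element is reached; so $\RC(\lambda) \cong B(\lambda)$ can be realized as the component of $(\nu_\emptyset, J_\emptyset) \otimes t_\lambda$ inside $\RC(\infty) \otimes T_\lambda$. The next step is to unwind the tensor product crystal operator $f_a((\nu,J) \otimes t_\lambda)$: since $\varepsilon_a(t_\lambda) = -\infty$, one always has $\varphi_a(\nu,J) > \varepsilon_a(t_\lambda)$, so $f_a$ acts on the left tensor factor, giving $f_a(\nu,J) \otimes t_\lambda$ — unless this would leave the component, which for $a \in I^{\re}$ happens exactly when $\varepsilon_a^*$ of the result exceeds $\langle h_a, \lambda\rangle$, and for $a \in I^{\im}$ is governed by the vanishing of $\varphi_a$ relative to $\langle h_a,\lambda\rangle$. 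Comparing this with the definition of $f'_a$ — which kills $(\nu,J)$ precisely when some rigging $x \in J_i^{(a)}$ exceeds $p_i^{(a)} + \langle h_a,\lambda\rangle$ (equivalently, a corigging would become negative past the allowed bound, i.e.\ $\tep_a^*$ grows too large), or when $\varphi_a(\nu,J) = 0$ for imaginary $a$ — shows the two rules agree. This is essentially the content already invoked for Theorem~\ref{thm:highest_weight_RC}, so the bulk of this step can be cited.

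It then remains to verify that membership in the component of $(\nu_\emptyset, J_\emptyset) \otimes t_\lambda$ is equivalent to the stated inequalities. The containment $\RC(\lambda) \subseteq \{\cdots\}$ follows by induction on depth using that $f'_a$ preserves the bound and that $e_a^*, f_a^*$ are computed via riggings/coriggings as in Definition~\ref{def:RC_star_crystal_ops}; concretely, one checks $\tep_a^*$ can only increase past $\langle h_a,\lambda\rangle$ under an application of some $f_b$, and the $f'_a$-constraint is exactly what prevents the $b=a$ case while the colabel-fixing property of Definition~\ref{def:RC_crystal_ops} handles $b \ne a$. For the reverse inclusion, I would use the standard argument: if $(\nu,J) \otimes t_\lambda$ satisfies the bounds and is not the highest weight element, then by the recognition of $\RC(\infty)$ there is some $a$ with $e_a(\nu,J) \ne \zero$, and one checks $e_a(\nu,J) \otimes t_\lambda$ still satisfies the bounds (the real case because $e_a$ decreases $\varepsilon_a$ and does not increase $\tep_a^*$ beyond the bound since it fixes coriggings outside the changed row; the imaginary case by Lemma~\ref{lemma:imaginary_partition}), so descending we reach $(\nu_\emptyset,J_\emptyset) \otimes t_\lambda$, proving it lies in the component. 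The main obstacle I anticipate is the imaginary-index bookkeeping: one must carefully check that the condition "$e_a^*(\nu,J) = \zero$ if $\langle h_a,\lambda\rangle = 0$" is the correct translation of the $B(\lambda)$-membership condition for imaginary $a$ (as opposed to an $\tep_a^*$-inequality), and that it is preserved under $e_a$ for every $a$, using that for imaginary $a$ the partition $\nu^{(a)}$ is a single column and $e_a^*$ removes the row of largest corigging only when that corigging equals $-A_{aa}/2$; this is where the asymmetry noted in the second remark after Theorem~\ref{thm:Binf_recog} must be handled with care.
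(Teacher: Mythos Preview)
Your plan is broadly sound, but it differs from the paper's argument and its framing contains a circularity. You open by invoking ``the generalized Kac--Moody analogue of Kashiwara's description~\cite[Prop.~8.2]{K95}'' as an input, but no such statement is available independently in the literature for generalized Kac--Moody algebras: establishing that analogue is precisely the content of this corollary (as the introduction to the paper announces). Fortunately, your third paragraph sketches a self-contained two-inclusion argument (induction on depth for $\RC(\lambda)\subseteq\{\cdots\}$, descent via $e_a$ for the reverse), and that does stand on its own; it is the actual proof, and the opening appeal to a ready-made analogue of~\cite[Prop.~8.2]{K95} should be dropped. This structural route is exactly what the paper records in Remark~\ref{rem:alt_proof_classification} as the ``alternative (abstract) proof''.

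The paper's own argument is different and more concrete. For $a\in I^{\re}$ it simply cites~\cite[Prop.~5]{SalS16II}. For $a\in I^{\im}$ with $\langle h_a,\lambda\rangle=0$, it tracks coriggings directly: if $f_a$ is applied to $(\nu,J)$ at a moment when $\varphi_a(\nu,J)=\langle h_a,\wt(\nu,J)\rangle=0$ (forcing $\nu^{(a)}=\emptyset$ and $p_1^{(a)}=0$), the new row has corigging exactly $-A_{aa}/2$; since all subsequent crystal operators preserve coriggings and $f_a$ never revisits that row, this corigging persists, so $e_a^*$ of every descendant is nonzero. Conversely, if $\varphi_a(\nu,J)>0$ when $f_a$ acts, the new corigging is strictly larger than $-A_{aa}/2$ and remains so, giving $e_a^*=\zero$ thereafter. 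This single computation replaces your inductive bookkeeping in both directions at once. One minor correction: the $C$ factor cannot be ``suppressed'' on the grounds you give --- it is precisely what makes $f_a$ return $\zero$ in $B(\infty)\otimes T_\lambda\otimes C$ and hence carves out an honest connected component --- though your containment argument does not actually depend on that claim.
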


\begin{proof}
For $a \in I^{\re}$, this was done in~\cite[Prop.~5]{SalS16II}.

For $a \in I^{\im}$, consider some $(\nu, J) \in \RC(\infty)$, and it is sufficient to consider the case when $\inner{h_a}{\lambda} = 0$. If $\inner{h_a}{\wt(\nu, J)} = p_{\infty}^{(a)} = 0$, then we have $\nu^{(a)} = \emptyset$ and $p_1^{(a)} = 0$ since $-A_{aa'} \geq 0$ for all $a' \in I$.
Therefore, the smallest nonnegative corigging in $\bigl( f_a(\nu, J) \bigr)^{(a)}$ is $-A_{aa}/2$.
Let $(\nu', J') = f_{\vec{a}} f_a (\nu, J)$ be a rigged configuration obtained from $f_a(\nu, J)$ after applying some (possibly empty) sequence $f_{\vec{a}}$ of crystal operators. Since the crystal operators preserve coriggings and $f_a$ will never again act on this row, the smallest nonnegative corigging of $(\nu', J')^{(a)}$ is $-A_{aa}/2$. Hence, we have $e_a^*(\nu', J') \neq 0$. Similarly, if $\inner{h_a}{\wt(\nu, J)} > 0$, then the smallest corigging of $\bigl( f_a(\nu, J) \bigr)^{(a)}$ is strictly larger than $-A_{aa}/2$. Hence, we have $e_a^* f_{\vec{a}} f_a (\nu, J) = 0$ for any sequence of crystal operators $f_{\vec{a}}$.
\end{proof}

\begin{remark}
\label{rem:alt_proof_classification}
An alternative (abstract) proof of Corollary~\ref{cor:classification_hw_star} can be done using~\cite[Prop.~8.2]{K95} for $a \in I^{\re}$ and the recognition theorem (Theorem~\ref{thm:Binf_recog}) for $a \in I^{\im}$. In particular, for any $a \in I^{\im}$ and $v \in B(\infty)$ we have $\inner{h_a}{\wt(v)} = 0$ if and only if $\kappa_a(v) = 0$, $\inner{h_a}{\lambda} = 0$, and $e_a^* v = \zero$ (alternatively, $\widetilde{\varepsilon}_a^*(v) = 0$). Moreover, $e_a^*$ commutes with $f_{a'}$ for all $a' \in I$.
\end{remark}

\bibliography{borcherds_RC}{}
\bibliographystyle{amsplain}
\end{document}